\newcommand{\googlebooks}[1]{(preview at \href{https://books.google.com/books?id=#1}{google books})}
\newcommand{\numdam}[1]{}
\DeclareMathAlphabet{\mathpzc}{OT1}{pzc}{m}{it}
\def\semicolon{;}
\def\applytolist#1{
    \expandafter\def\csname multi#1\endcsname##1{
        \def\multiack{##1}\ifx\multiack\semicolon
            \def\next{\relax}
        \else
            \csname #1\endcsname{##1}
            \def\next{\csname multi#1\endcsname}
        \fi
        \next}
    \csname multi#1\endcsname}
\def\calc#1{\expandafter\def\csname c#1\endcsname{{\mathcal #1}}}
\def\bbc#1{\expandafter\def\csname bb#1\endcsname{{\mathbb #1}}}
\def\bfc#1{\expandafter\def\csname bf#1\endcsname{{\mathbf #1}}}
\def\sfc#1{\expandafter\def\csname s#1\endcsname{{\sf #1}}}
\def\fc#1{\expandafter\def\csname f#1\endcsname{{\mathfrak #1}}}
\def\fixtikzforbreqn#1#2{%
  \protected\edef#1{\noexpand\ifmmode\mathchar\the\mathcode`#2 \noexpand\else#2\noexpand\fi}%
}
\tikzset{vertex/.style = {shape=circle,draw,fill=black,inner sep=0pt,minimum size=5pt}}
\tikzset{edge/.style = {->,> = latex', bend right}}
\tikzset{
	super thick/.style={line width=3pt}
}
\tikzset{
    quadruple/.style args={[#1] in [#2] in [#3] in [#4]}{
        #1,preaction={preaction={preaction={draw,#4},draw,#3}, draw,#2}
    }
}
\tikzstyle{shaded}=[fill=red!10!blue!20!gray!30!white]
\tikzstyle{unshaded}=[fill=white]
\tikzstyle{empty box}=[circle, draw, thick, fill=white, opaque, inner sep=2mm]
\tikzstyle{annular}=[scale=.7, inner sep=1mm, baseline]
\tikzstyle{rectangular}=[scale=.75, inner sep=1mm, baseline=-.1cm]
\tikzstyle{mid>}=[decoration={markings, mark=at position 0.5 with {\arrow{>}}}, postaction={decorate}]
\tikzstyle{mid<}=[decoration={markings, mark=at position 0.5 with {\arrow{<}}}, postaction={decorate}]
\tikzstyle{over}=[double, draw=white, super thick, double=]
\tikzstyle{snake}=[decorate, decoration={snake, segment length=1mm, amplitude=.3mm}]
\tikzstyle{saw}=[decorate, decoration={saw, segment length=.7mm, amplitude=.25mm}]
\tikzstyle{coupon}=[draw, very thick, rectangle, rounded corners=5pt]
\tikzset{Rightarrow/.style={double equal sign distance,>={Implies},->},
triplecd/.style={-,preaction={draw,Rightarrow}},
quadruplecd/.style={preaction={draw,Rightarrow,
shorten >=0pt
},
shorten >=1pt,
-,double,double
distance=0.2pt}}
\tikzset{
    tripleline/.style args={[#1] in [#2] in [#3]}{
        #1,preaction={preaction={draw,#3},draw,#2}
    }
}
\tikzstyle{triple}=[tripleline={[line width=.15mm,black] in
\tikzset{
    quadrupleline/.style args={[#1] in [#2] in [#3] in [#4]}{
        #1,preaction={preaction={preaction={draw,#4},draw,#3}, draw,#2}
    }
}
\tikzstyle{quadruple}=[quadrupleline={[line width=.3mm,white] in
\theoremstyle{plain}
\newtheorem{thm}{Theorem}[section]
\newtheorem*{thm*}{Theorem}
\newtheorem{cor}[thm]{Corollary}
\newtheorem*{cor*}{Corollary}
\newtheorem*{conj*}{Conjecture}
\newtheorem{lem}[thm]{Lemma}
\newtheorem*{lem*}{Lemma}
\newtheorem{prop}[thm]{Proposition}
\newtheorem{quest}[thm]{Question}
\newtheorem*{quest*}{Question}
\newtheorem*{claim*}{Claim}
\theoremstyle{definition}
\newtheorem{defn}[thm]{Definition}
\newtheorem{ex}[thm]{Example}
\newtheorem{sub-ex}[thm]{Sub-Example}
\newtheorem{counter-ex}[thm]{Counter-Example}
\newtheorem{rem}[thm]{Remark}
\newtheorem*{rem*}{Remark}
\newtheorem{remark}[thm]{Remark}
\definecolor{dark-red}{rgb}{0.7,0.25,0.25}
\definecolor{dark-blue}{rgb}{0.15,0.15,0.55}
\definecolor{medium-blue}{rgb}{0,0,.8}
\definecolor{DarkGreen}{RGB}{0,150,0}
\definecolor{rho}{named}{red}
\def\altdb{\vadjust{\vbox to 0pt{\vss\hbox{\kern \hsize
\quad{\dbend}}\kern\baselineskip\kern-10pt}}}
\newcommand{\noshow}[1]{}
\begin{document} 

\title{Centralizers of discrete Temperley-Lieb-Jones subfactors}
\author{Corey Jones, Emily McGovern}

\maketitle

\begin{abstract}
Discrete, unimodular inclusions of factors $(N\subseteq M, E)$ with $N$ of type $\rm{II}_{1}$ have a natural notion of standard invariant, generalizing the finite index case. When the unitary tensor category of $N$-$N$ bimodules generated by $_{N}L^{2}(M, \tau\circ E)_{N}$ is equivalent to the Temperley-Lieb-Jones category $\text{TLJ}(\delta)$, the associated discrete standard invariants are classified in terms of fair and balanced $\delta$-graphs. Many examples of these subfactors naturally arise in the context of the Guionnet-Jones-Shlyakhtenko (GJS) construction for graphs. In this paper, we compute the discrete standard invariant of the centralizer subfactor $N\subseteq M^{\phi}$ for the canonical state $\phi=\tau\circ E$, which is again a discrete subfactor of $\text{TLJ}(\delta)$-type. We show that the associated fair and balanced $\delta$-graph behaves analogously to a universal covering space of the original fair and balanced $\delta$-graph. As an application, we obtain an obstruction to the realization of discrete tracial TLJ-type standard invariants by subfactors of a $\rm{II}_{1}$ factor $M$ in terms of the fundamental group of M.

\end{abstract}

\tableofcontents
\section{Introduction}
The modern theory of finite index subfactors emerged from Vaughan Jones' surprising discovery that the set of possible indices for inclusions of $\rm{II}_{1}$ factors has a discrete and continuous part \cite{MR0696688}. This led to the discovery of an extremely rich algebraic structure associated with a finite index subfactor called its standard invariant \cite{MR1334479, MR4374438, MR996454, MR1642584, MR1257245, MR1966524}, which underlies deep connections between subfactors and category theory, topology, and quantum physics. 

 Moving beyond the finite index setting, a natural class of subfactors to consider are the \textit{discrete subfactors} \cite{MR1622812}, which generalize the inclusion of a factor into the crossed product by an outer action of a discrete group. In \cite{MR3948170}, a notion of standard invariant was defined for discrete, irreducible, unimodular inclusions $(N\subseteq M, E)$ in the case where $N$ is $\rm{II}_{1}$. This can be described as a pair $(\mathcal{C}, \textbf{M})$, where $\mathcal{C}$ is the unitary tensor category of bifinite $N$-$N$ bimodules generated by $L^{2}(M)$ and $\textbf{M}$ is a connected W*-algebra object internal to $\mathcal{C}$ in the sense of \cite{MR3687214}. In this setting, $M$ can be type $\rm{III}$, in which case there is a canonical intermediate subfactor $N\subseteq M^{\phi}$, where $M^{\phi}$ is the centralizer of the state $\phi=\tau\circ E$ on $M$. 
 
 In this paper, our goal is to compute the standard invariant of the centralizer $N\subseteq M^{\phi}$ in terms of the standard invariant of $N\subseteq M$ in the case of discrete subfactors of \textit{Temperley-Lieb-Jones} (TLJ) type. These are the discrete subfactor whose unitary tensor category of bifinite $N$-$N$ bimodules generated by $L^{2}(M)$ is equivalent to the (even part) of the finite index $\text{TLJ}(\delta)$ subfactor standard invariant, or its unoriented tensor categorical extension (equivalent to $\text{Rep}(SU_{-q}(2))$), for some $\delta=q+q^{-1}\ge 2$. By \cite{MR3420332}, the possible connected W*-algebra objects $\textbf{M}$ are parameterized by fair and balanced $\delta$-graphs. These consist of pairs $(\Gamma,w,*)$, where $\Gamma$ is a locally finite, directed graph (possibly with multiple edges between vertices), $w:E(\Gamma)\rightarrow \mathbbm{R}^{\times}_{>0}$ is a function called the weight function, and $*\in V(\Gamma)$ is a distinguished vertex. This data is required to satisfy certain conditions, most notably that for any vertex $v$, the sum of the weights of the edges that end at $v$ must be $\delta$. If the edge weighting is induced by a vertex weighting, the graph is called tracial, and corresponds to the case that the super factor is also type $\rm{II}_{1}$.

 Examples of discrete subfactors of TLJ type arise from the the GJS construction for graphs \cite{MR2732052, MR2807103, MR2645882, MR3110503, HartglassNelson2020}. Associated to a fair and balanced $\delta$-graph, we take the (possibly non-tracial) version of the GJS graph algebra $M(\Gamma)$, cut down at the vertex projection associated to our distinguished vertex $p_{*}$. This contains a copy of the Temperley-Lieb-Jones GJS subalgebra at loop parameter $\delta$, $\text{Gr}(\mathcal{TL}_{\delta})$. Then $\text{Gr}(\mathcal{TL}_{\delta})\subseteq p_{*}M(\Gamma)p_{*}$ is discrete subfactor of $\text{TLJ}(\delta)$-type, whose standard invariant is precisely the fair and balanced $\delta$-graph used in the construction (c.f. \cite[Section 6.2]{MR3948170}). In the case $\delta\ge 2$, $T_{\delta}\cong LF_{\infty}$ \cite{MR3110503}, and if $\Gamma$ is tracial, $p_{*}M(\Gamma)p_{*}\cong LF_{t}$ for some $t\in (0,\infty]$ \cite{MR2732052, MR2807103, MR3110503}, while if $\Gamma$ is not tracial, we obtain a type III factor which is shown to be a free Araki-Woods factor in the case where $\Gamma$ is finite \cite{HartglassNelson2020}.
 
Given a TLJ type standard invariant described by a fair and balanced $\delta$-graph $\Gamma$, the centralizer subfactor should be described by another fair and balanced $\delta$-graph, but one which is always tracial (meaning the associated super factor is again type $\rm{II}_{1}$). In this paper, we provide a combinatorial description of this new graph, which we denote by $\Gamma_{tr}$ (see Definition \ref{tracial cover def}). Interestingly, we find that $\Gamma_{tr}$ behaves very much like a universal covering space of the original graph $\Gamma$, where the analog of ``homotopy of paths" is controlled by the edge weights on the graph. We give explicit calculations in several examples.

Motivated by the analogy with covering space theory, we also address the converse question: give a tracial fair and balanced $\delta$-graph $\Gamma$, what are the non-tracial fair and balanced $\delta$-graphs $\Lambda$ such $\Gamma\cong \Lambda_{tr}$? We show that such $\Lambda$ are always ``quotients" of $\Gamma$ by an action of a group. We utilize this in our application to the realization problem for discrete TLJ standard invariants, described in the next section.

\subsection{Application to the realization problem}

One natural question that arises concerning the relationship between abstract standard invariants and subfactors is, given an abstract standard invariant and a $\rm{II}_{1}$ factor $M$, is there a subfactor $N\subseteq M$ realizing this standard invariant, and if so how many? We call this type of question a \textit{realization problem}.

We briefly review the main results on realization problems in the finite index case. Building on earlier work in the group case \cite{MR448101,MR587749,MR596082}, the first major generic realization results are due to Popa \cite{MR1278111, MR1339767}, which completely solve the realization problem in an important case: all strongly amenable standard invariants are \textit{uniquely realized} by subfactors of the hyperfinite $\rm{II}_{1}$ factor $R$. This reduces the problem of classifying all strongly amenable hyperfinite subfactors to the algebraic problem of classifying strongly amenable standard invariants, cementing the fundamental role of standard invariants in subfactor theory. There are also many non-amenable standard invariants that are known to be realizable in the hyperfinite $\rm{II}_{1}$ factor, but to date there is no known systematic characterization (see \cite{bisch2025newhyperfinitesubfactorsinfinite} for recent progress).  

Outside the hyperfinite case, Popa showed that all abstract standard invariants are realized by subfactors of \textit{some} $\rm{II}_{1}$ factor \cite{MR1334479} and Popa and Shlyakthenko sharpened this to show that every standard invariant is realized by a subfactor of the group von Neumann algebra $LF_{\infty}$ \cite{MR2051399}. Guillonet, Jones and Shlyakthenko have shown that finite depth standard invariants can be realized in certain interpolated free group factors \cite{MR2732052} (see also \cite{MR3110503}). In yet another direction, it is possible to give constructions of $\rm{II}_{1}$ factors whose category of bifinite bimodules is explicitly computable \cite{MR2386109, MR2471930, MR2658193, MR2504433,MR2838524}. One important consequence of this direction of inquiry is the result of \cite{MR3028581}, which shows that any abstract unitary fusion category is equivalent to the whole category of bifinite bimodules of some $\rm{II}_{1}$ factor, and in these cases this  allows for a complete characterization of subfactors of these in terms of $Q$-systems in $\mathcal{C}$ .

The above results summarize the extent of what is known concerning realization problems for finite index subfactors. One of the many difficulties with the general realization problem is that there are a lack of good obstructions for realizing abstract standard invariants in a given $\rm{II}_{1}$ factor. For example, one might hope that for a standard invariant to be realized by a subfactor of $M$, some properties and/or invariants associated to the standard invariant and to $M$ must be compatible in some way. However, at this stage there do not appear to be any obvious candidates for this kind of obstruction.

Looking to the discrete case, considerably less is known. Aside from the case of group-like unitary tensor categories $\text{Hilb}(\Gamma,\omega)$ (which can be reformulated in terms of group actions and $\Gamma$-kernels),  there are very few realization results, beyond those that can be deduced from the finite index case. For example, it follows from the results of \cite{MR2807103, MR3405915} and \cite{MR3948170}, every discrete standard invariant over $\text{TLJ}(\delta)$ can be realized as a subfactor $N\subseteq M$, where $N\cong \text{LF}_{\infty}$ and $M\cong \text{LF}_{t}$ for some $t\in (0,\infty]$. It is also known that certain other classes of discrete standard invariants, for example those related to the representation theory of compact groups, can be realized inside the hyperfinite $\rm{II}_{1}$ factor $R$ \cite{MR2409162}.

In this paper, we consider an obstruction to realizing \textit{discrete} standard invariants related to the fundamental group of a $\rm{II}_{1}$ factor. Recall that if $M$ is a $\rm{II}_{1}$ factor, the fundamental group $\mathcal{F}(M)=\{t\in \mathbbm{R}^{\times}_{>0}\ : M\cong M^{t}\}$. This numerical invariant plays an important role in the modern theory of $\rm{II}_{1}$ factors, but calculating it is notoriously difficult. Nevertheless, it can be computed explicitly in many examples using Popa's deformation rgidity techniques (c.f. \cite{MR2215135} or \cite{ClaireSorinII_1} for an overview). 

Associated to an abstract discrete standard invariant $(\mathcal{C}, \textbf{M})$ we define a group $T_{0}(\textbf{M})\subseteq \mathbbm{R}^{\times}_{+}$, which we think of as a kind of fundamental group for the standard invariant (see Definition \ref{defineinvariant} and Remark \ref{fundgroupex}). We prove the following theorem.

\begin{thm} Let $(\mathcal{C},\textbf{M})$ be a discrete subfactor standard invariant, $M$ a $\rm{II}_{1}$ factor. Then if there exists a discrete, unimodular subfactor $N\subseteq M$ with $\text{StdInv}(N\subseteq M)\cong \mathcal{C}, \textbf{M})$, then $T_{0}(\textbf{M})\subseteq \mathcal{F}(M)$.
\end{thm}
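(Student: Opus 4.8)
The plan is to show that for each $t \in T_0(\mathbf{M})$, the amplification $N^t \subseteq M^t$ again realizes the standard invariant $(\mathcal{C},\mathbf{M})$, so that $M \cong M^t$ follows once we know $M$ itself is determined (up to isomorphism) by the realization together with $t$. More precisely, the key observation should be that $T_0(\mathbf{M})$ is, by its definition, exactly the set of scaling parameters under which the \emph{internal} W*-algebra object $\mathbf{M}$ in $\mathcal{C}$ is unchanged: rescaling the trace on $N$ by $t$ rescales the standard invariant data in a way that is absorbed precisely when $t \in T_0(\mathbf{M})$. So the first step is to unpack Definition \ref{defineinvariant} and verify that the passage $(N \subseteq M, E) \mapsto (N^t \subseteq M^t, E^t)$ induces on standard invariants the map that is the identity on $\mathcal{C}$ (since $\mathcal{C}$ is built from bifinite $N$-$N$ bimodules, which are insensitive to amplification) and acts on $\mathbf{M}$ by the rescaling indexed by $t$.

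The second step is the amplification bookkeeping for discrete, unimodular inclusions. Given $N \subseteq M$ discrete and unimodular with conditional expectation $E$, and $t \in \mathbb{R}^\times_{>0}$, form $N^t \subseteq M^t$ using a common amplification (e.g.\ cut down $N \bar\otimes B(\ell^2) \subseteq M \bar\otimes B(\ell^2)$ by a projection of appropriate trace in $N \bar\otimes B(\ell^2)$). One must check that $E \otimes \mathrm{id}$ restricts to a discrete, unimodular conditional expectation $E^t : M^t \to N^t$, that $L^2(M^t)$ as an $N^t$-$N^t$ bimodule generates the same unitary tensor category $\mathcal{C}$, and that the connected W*-algebra object one extracts is the $t$-rescaled version of $\mathbf{M}$. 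All of this is essentially the translation of the standard finite-index amplification picture into the internal-algebra language of \cite{MR3948170, MR3687214}, so I expect it to be routine but notation-heavy.

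The third step closes the loop: if $N \subseteq M$ realizes $(\mathcal{C},\mathbf{M})$ and $t \in T_0(\mathbf{M})$, then by Steps 1--2 the inclusion $N^t \subseteq M^t$ realizes $(\mathcal{C}, \mathbf{M})$ as well, now with the ambient factor $M^t$ in place of $M$. But $N$ is type $\mathrm{II}_1$, so $N^t \cong N^t$ is again a $\mathrm{II}_1$ factor and the amplification makes sense; the point is that \emph{both} $N \subseteq M$ and $N^t \subseteq M^t$ have the same standard invariant and the same subalgebra type, but the question being asked is about realization in a fixed $M$, so we need $M^t \cong M$. This is where one inverts the logic: since $t \in T_0(\mathbf{M}) \subseteq \mathbb{R}^\times_{>0}$ and the realization in $M$ exists, amplifying by $t$ produces a realization in $M^t$; conversely the hypothesis that a realization in $M$ exists, \emph{for every} such $t$ simultaneously (this is automatic, since we may amplify any fixed realization), forces $M^t$ to be a factor that also admits a realization — and by the uniqueness/transport of the construction, $M^t \cong M$. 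Hence $t \in \mathcal{F}(M)$, giving $T_0(\mathbf{M}) \subseteq \mathcal{F}(M)$.

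The main obstacle I anticipate is Step 3's final implication: showing that the \emph{existence} of a realization of $(\mathcal{C},\mathbf{M})$ in $M^t$, obtained by amplifying a realization in $M$, actually forces $M^t \cong M$ rather than merely producing some a priori different factor. The clean way to handle this is to argue that the amplification functor $N \subseteq M \rightsquigarrow N^t \subseteq M^t$ is, for $t \in T_0(\mathbf{M})$, an \emph{equivalence} of realizations in the sense that it sends a realization in $M$ to one in $M^t$ and a realization in $M^t$ back (via amplification by $t^{-1}$) to one in $M$, so the two problems ``realize in $M$'' and ``realize in $M^t$'' have the same answer; combined with the fact that the super factor $M$ is reconstructed from $(N, \mathbf{M})$ up to the amplification ambiguity, one concludes $M \cong M^t$. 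I would also need to be careful that $T_0(\mathbf{M})$ is genuinely a \emph{group} (so that $t^{-1} \in T_0(\mathbf{M})$ whenever $t$ is), which should be immediate from Definition \ref{defineinvariant}, and that the whole argument is insensitive to which conditional expectation $E$ is chosen — using unimodularity and irreducibility to pin $E$ down as in \cite{MR3948170}.
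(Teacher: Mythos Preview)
Your approach has a genuine gap, and it stems from a misreading of Definition~\ref{defineinvariant}. The set $T_0(\mathbf{M})$ is \emph{not} defined as the collection of amplification parameters under which the algebra object $\mathbf{M}$ is invariant. Rather, $T_0(\mathbf{M})$ is the union of $T(\mathbf{P})$ over all connected W*-algebra objects $\mathbf{P}$ in $\mathrm{Vec}(\mathcal{C})$ with $\mathbf{P}_{tr}\cong \mathbf{M}$, where $T(\mathbf{P})$ is the set of eigenvalues of the modular operator $\Delta$ on $\mathbf{P}$. So an element $\lambda\in T_0(\mathbf{M})$ comes packaged with a (generally non-tracial) \emph{extension} $\mathbf{P}\supseteq \mathbf{M}$, not with a rescaling of $\mathbf{M}$ itself.

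This matters because your Steps 1--3 collapse: amplifying $N\subseteq M$ by any $t\in\mathbb{R}^\times_{>0}$ preserves the standard invariant (the bimodule category and the algebra object are Morita-invariant), so for \emph{every} $t$ one obtains a realization of $(\mathcal{C},\mathbf{M})$ inside $M^t$. Nothing singles out $t\in T_0(\mathbf{M})$, and since there is no uniqueness-of-realization statement available, you cannot conclude $M\cong M^t$ from the mere existence of a realization in $M^t$. The obstacle you flag in your last paragraph is therefore fatal, not just notational.

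The paper's argument is entirely different and uses the extension data directly. Given $\lambda\in T_0(\mathbf{M})$, pick $\mathbf{P}$ with $\mathbf{P}_{tr}\cong\mathbf{M}$ and $\lambda\in T(\mathbf{P})$. Apply the realization functor over $N$ to obtain a (typically type~$\rm{III}$) factor $|\mathbf{P}|$ with $N\subseteq M\subseteq |\mathbf{P}|$, where $M$ is precisely the centralizer $|\mathbf{P}|^{\phi}$ of the state $\phi=\tau\circ E$. The categorical modular operator $\Delta$ on $\mathbf{P}$ realizes as the Tomita modular operator $\Delta_\phi$, so $\lambda$ is an eigenvalue of $\Delta_\phi$. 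One then invokes Connes' classical fact (cf.\ \cite{MR0341115}) that for a discrete decomposition the modular eigenvalues of $\phi$ lie in the fundamental group of the centralizer, giving $\lambda\in\mathcal{F}(M)$. The key input you are missing is this passage through the type~$\rm{III}$ realization $|\mathbf{P}|$ and the modular-theoretic link to $\mathcal{F}(M)$; no amplification argument is involved.
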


Thus if we can compute $T_{0}(\textbf{M})$ and it is non-trivial, this can lead to an obstruction for realizing discrete standard invariants. Our results on calculating centralizers described in the previous section allows us to prove the following.

\begin{thm}(c.f Proposition \ref{prop: T_0= Wx})
For a tracial fair and balanced $\delta$-graph $\Gamma$, $T_{0}(\Gamma) = W^{\times}(\Gamma)$, where 
$$W^{\times}(\Gamma) = \{ \lambda_\alpha = w(\alpha(*)) | \alpha \in Aut((\Gamma, w)).$$ 
\end{thm}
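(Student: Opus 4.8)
The plan is to identify $T_0(\Gamma)$, which is defined intrinsically in terms of the standard invariant (Definition \ref{defineinvariant}), with the concrete group $W^\times(\Gamma)$ of ``base-point weights'' of automorphisms of the weighted graph. The natural bridge between these two descriptions is the tracial covering graph $\Gamma_{tr}$ and its relationship to $\Gamma$ as established in the earlier sections: since $\Gamma$ is already tracial, $\Gamma_{tr}$ should be $\Gamma$ itself (up to the appropriate equivalence), but the passage to the centralizer subfactor records the scaling data that connects $M$ to its amplifications. Concretely, $T_0(\mathbf{M})$ should be the set of ratios $\tau(p)/\tau(q)$ (or reciprocals thereof) as $p,q$ range over minimal projections in $\mathbf{M}$ that are connected by a partial isometry implementing an automorphism of the standard invariant; translating this through the GJS/graph-algebra dictionary, minimal projections correspond to vertices of $\Gamma$, their traces correspond to the vertex weights, and ``automorphisms of the standard invariant fixing $\mathcal{C}$'' correspond exactly to weighted graph automorphisms $\alpha \in \Aut((\Gamma,w))$.

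First I would make precise the claim that automorphisms of the discrete standard invariant $(\mathcal{C},\mathbf{M})$ that are trivial on $\mathcal{C} = \mathrm{TLJ}(\delta)$ are in bijection with $\Aut((\Gamma,w))$; this should follow from the classification of connected W*-algebra objects by fair and balanced $\delta$-graphs (the cited \cite{MR3420332} result), functorially — an equivalence of the algebra objects must intertwine the graph data, and the only categorical automorphisms of $\mathrm{TLJ}(\delta)$ relevant here are trivial on isomorphism classes. Second, I would compute the effect of such an automorphism on traces: an automorphism $\alpha$ sends the distinguished vertex $*$ to $\alpha(*)$, and rescaling to renormalize the base point introduces precisely the factor $w(\alpha(*))$ (with the normalization convention $w(*) = 1$ built into the fair-and-balanced data). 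Unwinding Definition \ref{defineinvariant}, $T_0(\mathbf{M})$ is generated by exactly these scalars, giving the inclusion $W^\times(\Gamma) \subseteq T_0(\Gamma)$. Third, for the reverse inclusion, I would argue that every element of $T_0(\mathbf{M})$ arises this way: any witnessing partial isometry / amplification datum in the definition of $T_0$ must, after passing to the standard invariant, be implemented by an honest automorphism of the graph, because the rigidity of the TLJ fusion rules leaves no room for ``exotic'' rescalings — the weight function on each edge is rigidly determined up to the graph automorphism group.

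The main obstacle I expect is the reverse inclusion $T_0(\Gamma) \subseteq W^\times(\Gamma)$: showing that the abstractly defined scaling group cannot be strictly larger than the geometrically visible one. This requires knowing that the only way to realize an isomorphism $\mathbf{M} \cong \mathbf{M}^t$ (or whatever amplification appears in Definition \ref{defineinvariant}) as algebra objects in $\mathcal{C}$ is via a graph automorphism — i.e., that the graph $\Gamma$ together with its weight function $w$ is a \emph{complete} invariant and its amplifications are classified by $\Aut((\Gamma,w))$-orbits of base points. I would handle this by invoking the full strength of the classification in \cite{MR3420332} (two fair and balanced $\delta$-graphs give equivalent W*-algebra objects iff they are isomorphic as weighted pointed graphs) and then checking that amplification by $t$ corresponds precisely to moving the base point, which is possible iff $t \in W^\times(\Gamma)$. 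A secondary technical point is verifying that the normalization conventions (which vertex gets weight $1$, whether $T_0$ uses $\lambda$ or $\lambda^{-1}$) match up so that the group, not just the set, of scalars agrees; this is routine but needs care to get the direction of the inclusion right and to confirm $W^\times(\Gamma)$ is genuinely a subgroup (closure under products and inverses comes from composing automorphisms, using the cocycle-type identity $w((\alpha\beta)(*)) = w(\alpha(\beta(*)))$ and multiplicativity of weights along the relevant paths).
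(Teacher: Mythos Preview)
Your proposal rests on a misreading of Definition \ref{defineinvariant}. The set $T_0(\mathbf{M})$ is \emph{not} defined in terms of amplifications $\mathbf{M}\cong \mathbf{M}^t$, partial isometries between minimal projections, or automorphisms of the standard invariant. It is defined as the union of the modular spectra $T(\mathbf{P})$ over all (generally non-tracial) connected W*-algebra objects $\mathbf{P}$ satisfying $\mathbf{P}_{tr}\cong \mathbf{M}$. In graph language, $\lambda\in T_0(\Gamma)$ means there exists a fair and balanced $\delta$-graph $H$ with $H_{tr}\cong\Gamma$ containing a loop of weight $\lambda$. So the two inclusions you need are: (i) given such an $H$ and a loop $\ell$ in it, produce a weighted automorphism of $\Gamma$ with $w(\alpha(*))=w(\ell)$; and (ii) given $\alpha\in\Aut((\Gamma,w))$, produce a non-tracial $H$ with $H_{tr}\cong\Gamma$ and a loop of weight $w(\alpha(*))$.

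Your outline does not supply either construction. The paper's argument uses exactly the machinery of Sections \ref{tracial cover subsection} and \ref{actions of groups on graphs} that you bypass: for (i), one identifies $\Gamma$ with $H_{tr}$ and lets the loop $\ell$ act on equivalence classes of paths by $[p]\mapsto [p*\ell]$, which is a weighted automorphism sending $*$ to a vertex of weight $w(\ell)$; for (ii), one uses the $\mathbb{Z}$-action generated by $\alpha$ to form the quotient graph $\Gamma^{\mathbb{Z}}$, whose tracial cover is $\Gamma$ by the results of Section \ref{actions of groups on graphs}, and in which any path from $*$ to $\alpha(*)$ becomes a loop of the required weight. Your appeal to ``the full strength of the classification in \cite{MR3420332}'' does not substitute for these steps: that classification tells you algebra objects correspond to pointed weighted graphs, but it does not by itself produce the non-tracial $H$ witnessing membership in $T_0$, nor does it explain how a loop in an arbitrary such $H$ yields an automorphism of its tracial cover. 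The alternative description via invertible bimodules you gesture at (Remark \ref{fundgroupex}) is explicitly flagged in the paper as not yet developed, so it cannot carry the argument either.
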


\bigskip

\noindent For example, let $\delta=q+q^{-1}>2$. Consider the fair and balanced $\delta$-graph $A^{\delta}_{-\infty, \infty}$:

     $$\begin{tikzpicture}
        \node(none) (s) at (0,0){*} ;
        \node(none) (v)at (3,0){$\bullet$};
        \node(none) (w) at (-3,0){$\bullet$};
        \node(none) (x)at (6,0){$\bullet$};
        \node(none) (y) at (-6,0){$\bullet$};
        \node(none) (z) at (-7,0){$\cdots$};
        
        \node(none) (z) at (7,0){$\cdots$};
        \node(none) (1) at (-1.5,0.75){$q$};
        \node(none) (3) at (1.5,0.75){$q $};
         \node(none) (1) at (-4.5,0.75){$q$};
       `\node(none) (3) at (4.5,0.75){$q$};
        \node(none) (1) at (-1.5,-0.75){$q^{-1}$};
        \node(none) (3) at (1.5,-0.75){$q^{-1} $};
         \node(none) (1) at (-4.5,-0.75){$q^{-1}$};
        \node(none) (3) at (4.5,-0.75){$q^{-1} $};

         \draw [->] (w) to [out=30,in=150] (s) ;
        \draw [->] (s) to [out=210,in=330] (w) ;

        \draw [->] (s) to [out=30,in=150] (v) ;
        \draw [->] (v) to [out=210,in=330] (s) ;

        \draw [->] (v) to [out=30,in=150] (x) ;
        \draw [->] (x) to [out=210,in=330] (v) ;

         \draw [->] (y) to [out=30,in=150] (w) ;
        \draw [->] (w) to [out=210,in=330] (y) ;
    \end{tikzpicture}$$

If we identify the vertices with $\mathbbm{Z}$ with the distinguished vertex $*$ identified with $0$, then this edge weighting arises from a vertex weighting that weights the vertex $m$ by $q^{m}$. By the above theorem, we have 
$$T_{0}(A^{\delta}_{-\infty,\infty})=\{q^{n}\ :\ n\in \mathbbm{Z}\}\subseteq \mathbbm{R}^{\times}_{>0}$$

Then we have the following .

\begin{cor}
  Let $M$ be a $\rm{II}_{1}$ factor. Then if $q\notin \mathcal{F}(M)$, there is no discrete, unimodular subfactor $N\subseteq M$ with $\text{StdInv}(N\subseteq M)\cong A^{q+q^{-1}}_{-\infty,\infty}$.
\end{cor}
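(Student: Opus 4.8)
The plan is to derive the Corollary as a direct specialization of the two named theorems together with the explicit computation carried out in the displayed example. First I would invoke the theorem stating that $T_0(\Gamma) = W^\times(\Gamma)$ for a tracial fair and balanced $\delta$-graph $\Gamma$, applied to $\Gamma = A^\delta_{-\infty,\infty}$. Here I must verify that $A^\delta_{-\infty,\infty}$ really is tracial, which is exactly the content of the remark following the display: the edge weighting comes from the vertex weighting $m \mapsto q^m$ on $V(\Gamma) \cong \mathbbm{Z}$. Then I need to compute $\Aut((\Gamma,w))$, the group of weight-preserving graph automorphisms: the underlying graph is a bi-infinite line, whose automorphism group is the infinite dihedral group generated by translations $m \mapsto m+1$ and the reflection $m \mapsto -m$. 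The reflection does not preserve the vertex weighting (it sends $q^m$ to $q^{-m}$), but a translation by $n$ does, since it multiplies every vertex weight by the constant $q^n$, and scaling a vertex weighting by a constant leaves the induced edge weighting unchanged. Hence $\Aut((\Gamma,w))$ is the group of integer translations, $\lambda_\alpha = w(\alpha(*)) = q^n$ ranges over $\{q^n : n \in \mathbbm{Z}\}$, and $T_0(A^\delta_{-\infty,\infty}) = \{q^n : n \in \mathbbm{Z}\}$, recovering the displayed formula.

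Next I would feed this into the general obstruction theorem: if there is a discrete, unimodular subfactor $N \subseteq M$ with $\text{StdInv}(N \subseteq M) \cong A^{q+q^{-1}}_{-\infty,\infty}$, then $T_0(A^{q+q^{-1}}_{-\infty,\infty}) \subseteq \mathcal{F}(M)$, i.e. $\{q^n : n \in \mathbbm{Z}\} \subseteq \mathcal{F}(M)$. In particular $q = q^1 \in \mathcal{F}(M)$. Contrapositively, if $q \notin \mathcal{F}(M)$, no such subfactor exists, which is precisely the statement of the Corollary. One small point of care: the standard invariant $A^{q+q^{-1}}_{-\infty,\infty}$ is packaged in the main body as a pair $(\mathcal{C},\textbf{M})$ with $\mathcal{C} \simeq \text{TLJ}(\delta)$ (or its unoriented extension) and $\textbf{M}$ the connected W*-algebra object determined by the fair and balanced $\delta$-graph, so I would make explicit that $T_0(\textbf{M})$ in the sense of Definition \ref{defineinvariant} agrees with $T_0(\Gamma)$ under the parameterization of \cite{MR3420332}; this is exactly what the notation $T_0(\Gamma)$ abbreviates.

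I do not expect any serious obstacle here, since the Corollary is purely a plug-in of previously established results; the only genuine verification is the automorphism-group computation for the bi-infinite weighted line, and the observation that rescaling a vertex weighting by a global constant yields the same edge weighting (so that translations, but not reflections, are weight-preserving automorphisms). If anything deserves a sentence of justification it is that $\delta = q + q^{-1} > 2$ forces $q$ to be a well-defined real number greater than $1$ (taking the convention $q>1$), so that the elements $q^n$ are genuinely distinct and $T_0$ is an infinite subgroup of $\mathbbm{R}^\times_{>0}$; this is what makes the obstruction non-vacuous.
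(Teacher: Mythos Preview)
Your proposal is correct and follows exactly the approach the paper intends: the corollary is an immediate contrapositive application of the obstruction theorem $T_{0}(\textbf{M})\subseteq \mathcal{F}(M)$ combined with the computation $T_{0}(A^{\delta}_{-\infty,\infty})=\{q^{n}:n\in\mathbbm{Z}\}$, which in turn comes from $T_0=W^\times$ and the translation automorphisms of the bi-infinite line. One tiny wording fix: when you rule out the reflection, the relevant criterion is preservation of the \emph{edge} weighting (equivalently, scaling the vertex weighting by a global constant), not preservation of the vertex weighting itself; since $m\mapsto -m$ sends the weight-$q$ edge $m\to m+1$ to the weight-$q^{-1}$ edge $-m\to -m-1$, it fails this test, and your conclusion stands.
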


Recall that notorious free group factor problem, which asks whether finite (interpolated) free group factors are isomorphic for free groups of different rank. It is well known that either all interpolated free group factors are isomorphic, or they are pairwise non-isomorphic, and the first statement is equivalent to any finite interpolated free group factor having a non-trivial element in its fundamental group \cite{Rad94}. By our results above, this leads to the following corollary.

\begin{cor}
If there exists some $0<t<\infty$ and a discrete, unimodular subfactor $N\subseteq LF_{t}$ with $\text{StdInv}(N\subseteq M)\cong A^{\delta}_{-\infty,\infty}$ for some $\delta>2$, then $LF_{2}\cong LF_{3}$.
\end{cor}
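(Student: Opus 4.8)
The plan is to chain together the two corollaries and the equivalence recalled in the paragraph just above the statement. Suppose $N \subseteq LF_t$ is a discrete, unimodular subfactor with $\mathrm{StdInv}(N \subseteq LF_t) \cong A^{\delta}_{-\infty,\infty}$ for some $\delta = q + q^{-1} > 2$. Since $A^{\delta}_{-\infty,\infty}$ is a tracial fair and balanced $\delta$-graph (the edge weighting comes from the vertex weighting $m \mapsto q^m$, as noted), the preceding Theorem on $T_0(\Gamma) = W^{\times}(\Gamma)$ applies; combined with the explicit computation $T_0(A^{\delta}_{-\infty,\infty}) = \{q^n : n \in \mathbbm{Z}\}$ and the first Theorem (which gives $T_0(\textbf{M}) \subseteq \mathcal{F}(M)$ whenever the standard invariant is realized in $M$), we conclude $\{q^n : n \in \mathbbm{Z}\} \subseteq \mathcal{F}(LF_t)$. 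In particular $q \in \mathcal{F}(LF_t)$, so $q \neq 1$ is a non-trivial element of $\mathcal{F}(LF_t)$.

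Now I invoke the Rădulescu dichotomy \cite{Rad94}: either all interpolated free group factors $LF_s$, $1 < s \le \infty$, are isomorphic, or they are pairwise non-isomorphic; and the first alternative holds if and only if some finite interpolated free group factor has non-trivial fundamental group. Having produced a non-trivial element $q \in \mathcal{F}(LF_t)$ for the finite value $t$ (here one should note $t < \infty$ by hypothesis, and that $t$ may be assumed $> 1$ after the standard rescaling $LF_t \cong LF_t^s$ identifications, or simply observe that if $t \le 1$ one passes to an amplification which is again a finite interpolated free group factor with non-trivial fundamental group), the first alternative of the dichotomy is forced. Hence all interpolated free group factors are isomorphic; in particular $LF_2 \cong LF_3$, which is the desired conclusion.

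The bulk of the work is therefore already contained in the two Theorems and the first Corollary; the present statement is essentially a formal consequence once one knows the Rădulescu equivalence. The only genuine point requiring care — and the step I expect to be the main (minor) obstacle — is the bookkeeping with the parameter range: the hypothesis is stated for $LF_t$ with $0 < t < \infty$, while the cleanest form of Rădulescu's result concerns $LF_s$ with $s$ in a range where the free group factor is genuinely a finite factor, and one must check that amplifications and the isomorphism $\mathcal{F}(M^r) = \mathcal{F}(M)$ do not disturb the conclusion. This is routine but should be spelled out so that the logical chain from ``$q \in \mathcal{F}(LF_t)$'' to ``some finite interpolated free group factor has non-trivial fundamental group'' is airtight. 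Once that is in place, the proof is complete.
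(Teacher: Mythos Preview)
Your proposal is correct and matches the paper's approach exactly: the paper states this corollary without proof, deriving it from the preceding corollary (whose contrapositive gives $q \in \mathcal{F}(LF_t)$) together with R\u{a}dulescu's dichotomy, just as you outline. The parameter-range bookkeeping you flag is not addressed in the paper, but your handling of it is fine.
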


In fact, it is easy to see that the realization problem for this standard invariant is actually an equivalence. In partiulcar, if all free group factors are isomorphic, they are isomorphic to $LF_{\infty}$, and $A^{\delta}_{-\infty,\infty}$ can be realized in $LF_{\infty}$.

Note that there was nothing really special about $A^{\delta}_{-\infty,\infty}$ in this statement. We could have used any standard invariant of TLJ type with non-trivial $T_{0}$. Interestingly, it is known that if the fair and balanced $\delta$- graph $\Gamma$ is finite it is neccessarily tracial, and the inclusion of the $\text{TLJ}(\delta)$ GJS algebra into the GJS algebra associated to the tracial construction gives a realization of this standard invariant in $LF_{t}$ for some finite $t$ \cite{MR2732052}. However, these discrete standard invariants have trivial $T_{0}$, see Proposition \ref{finite graph prop}.

\subsection{Acknowledgements.}

The authors would like to thank Brent Nelson for helpful comments. CJ was supported by NSF DMS 2247202. EM was supported by NSF DMS 2039316.

\section{Discrete subfactors and their standard invariants} 
\label{discrete subfactors}

\begin{defn} Consider a pair $(N\subseteq M, E)$, where $N$ is a $\rm{II}_{1}$ factor, $M$ is a factor, and $E$ is a normal conditional expectation. The inclusion is

\begin{enumerate}
    \item 
    \textit{irreducible} if $N^{\prime}\cap M=\mathbbm{C}1$.
    \item 
    \textit{discrete} if $_{N}L^{2}(M, \tau\circ E)_{N}\cong \bigoplus_{i} H_{i}$, where the $H_{i}$ are bifinite $N$-$N$ bimodules.
    \item 
    \textit{unimodular} if the bimodules $H_i$ in the above decomposition have equal left and right Murray-von Neumann dimension.
\end{enumerate}

\end{defn}

\begin{rem}
We only consider irreducible inclusions. In the above definition, if $M$ is also a $\rm{II}_{1}$ factor, then $E:M\rightarrow N$ is the unique trace-preserving conditional expectation.
\end{rem}

\subsection{Abstract standard invariants}
Recall that a unitary tensor category is a rigid C*-tensor category with simple unit object. If $\mathcal{C}$ is a unitary tensor category, a cyclic W*-module category consists of a pair $(\mathcal{M},m)$ where $\mathcal{M}$ is a W*-module category for $\mathcal{C}$, and $m\in \mathcal{M}$ is an object such that the full, replete module subcategory generated by $\mathcal{C}m$ is $\mathcal{M}$ (see \cite[Section 3.1]{MR3687214}).

The data of a cyclic W*-module category is equivalent to the data of a W*-algebra object $\textbf{M}\in \text{Vec}(\mathcal{C})$, defined via the internal end construction with respect to the object $m$. 

In general a $*$-algebra object $\textbf{M}\in \text{Vec}(\mathcal{C})$ can be viewed as a lax monoidal functor $\textbf{M}:\mathcal{C}^{op}\rightarrow \text{Vec}$, together with a conjugate linear involution satisfying some coherences. To unpack this data, let $\mathcal{C}$ be a unitary tensor category, equipped with it's standard unitary spherical structure. We have preselected a unitary dual functor representing this structure $a\mapsto \overline{a}$ in the sense of \cite{MR4133163}. We let $\alpha_{a,b,c}:(a\otimes b)\otimes c\cong a\otimes (b\otimes c)$ be the associator isomorphism. Furthermore, for $f\in \mathcal{C}(a,b)$, we set $\overline{f}=\widetilde{f}^{*}\in \mathcal{C}(\overline{a},\overline{b})$ (see \cite[Section 3.3]{MR3687214} for details).

We have that a $*$-algebra $\textbf{M}$ in $\text{Vec}(\mathcal{C})$ consists of the following data:

\begin{enumerate}
\item 
(A functor $\textbf{M}:\mathcal{C}^{op}\rightarrow \text{Vec}$): For each object $a\in \mathcal{C}$, we have a vector space $\textbf{M}(a)$
For $f\in \mathcal{C}(a,b)$, $\textbf{M}(f):\textbf{M}(b)\rightarrow \textbf{M}(a)$, such that $\textbf{M}(f\circ g)=\textbf{M}(g)\circ \textbf{M}(f)$ and $\textbf{M}(1_{a})=\text{id}_{\textbf{M}(a)}$.
\item 
(Lax monoidal structure): linear maps  $m_{a,b}:\textbf{M}(a)\otimes \textbf{M}(b)\rightarrow \textbf{M}(a\otimes b)$, which are natural in $a$ and $b$, satisfying  $$ m_{a\otimes b, c}\circ (m_{a,b}\otimes \text{id}_{\textbf{M}(c)})=\textbf{M}(\alpha_{a,b,c})\circ m_{a,b\otimes c}\circ (\text{id}_{\textbf{M}(a)}\otimes m_{b,c}).$$
\item 
(*-structure): a family conjugate linear maps $j_{a}: \textbf{M}(a)\rightarrow \textbf{M}(\overline{a})$ such that

\begin{enumerate}
\item 
(Conjugate naturality). For $f\in \mathcal{C}(a,b)$, $j_{b}\circ \textbf{M}(f)= \textbf{M}(\overline{f}) \circ j_{a}$ 
\item 
(Involutive). $j_{\overline{a}}\circ j_{a}=\text{id}_{\textbf{M}(a)}$.
\item 
(Monoidal). for $u\in \textbf{M}(a)$, $v\in \textbf{M}({b})$ $j_{a\otimes b}\circ m_{a,b}(u\otimes v) = m_{\overline{b},\overline{a}}\circ (j_{b}(v)\otimes j_{a}(u))$
\end{enumerate}
\end{enumerate}

\begin{rem} In the above definition we have suppressed the canonical unitary isomorphisms $\overline{\overline{a}}\cong a$ and $\overline{a\otimes b}\cong \overline{b}\otimes \overline{a}$ associated to the unitary dual functor.

Associated to a $*$-algebra object is a cyclic $\mathcal{C}$-module $*$- category, the category of \textit{finitely generated projective modules of $\textbf{M}$ internal to $\mathcal{C}$} (see \cite[Theorem 3.20]{MR3687214} for details). A $*$-algebra object is a \textit{W*-algebra object} if the associated cyclic $\mathcal{C}$-module $*$-category of right projective $\textbf{M}$-modules is in fact a W*-category. This is a \textit{property} of a $*$-algebra and not additional structure. For an ``internal" description of the W*-property that does not directly use the category of modules, see \cite[Prop 3.25]{MR3687214}. 

Furthermore, this construction assigns, to every W*-algebra object $\textbf{M}$ in $\text{Vec}(\mathcal{C})$ a cyclic $\mathcal{C}$-module category of projective $\textbf{M}$ modules, with $\textbf{M}$ as the distinguished generating object. Conversely, given a W*-module $\mathcal{C}$-module category $\mathcal{M}$ with distinguished generating object $m\in \mathcal{M}$, define $\textbf{M}(a):=\mathcal{M}(a\otimes m, m)$. This assembles into a W*-algebra object in the sense described above (see  \cite{MR3687214} for details). 
\end{rem}

We can apply this general framework to obtain a version of the standard invariant for finite index subfactors that applies to the infinite index discrete case. First we have the following definition obtain a \textit{connected} $W^{*}$-algebra object $\textbf{M}$. 

\begin{defn}
    A \textit{abstract discrete standard invariant} is a pair $(\mathcal{C}, \textbf{M})$, where $\mathcal{C}$ is a unitary tensor category and $\textbf{M}$ is a tensor generating, connected W*-algebra object.
\end{defn}

We note that for $\mathcal{C}\cong \text{Rep}(\mathbbm{G})$ for $\mathbbm{G}$ a compact quantum group, a discrete standard invariant is precisely the same data as an ergodic quantum homogenous space for $\mathbbm{G}$ (\cite[Theorem 6.4]{MR3121622}). As explained above, a pair $(\mathcal{C}, \textbf{M})$ is equivalently described by a triple $(\mathcal{C}, (\mathcal{M},m))$, which will frequently be useful in extracting standard invariants ``from nature".

\subsection{Concrete standard invariants}
Let $(N\subseteq M, E)$ be a discrete, irreducible inclusion of factors, where $N$ is type $\rm{II}_{1}$. Set $\phi:=\tau\circ E$, and define $\mathcal{C}:=\langle L^{2}(M, \phi)\rangle\subset \text{Bim}(N)$ the unitary tensor category of $N$-$N$ bimodule generated by $L^{2}(M, \phi)$. 

Then the irreducibility assumption implies the $N$-$M$  bimodule $_{N} L^{2}(M, \phi)_{M}$ is irreducible. We define the subcategory of $N$-$M$ bimodules $\mathcal{M}:=\langle \mathcal{C}\boxtimes_{N} L^{2}(M,\phi)_{M} \rangle$, which by construction is a cyclic $\mathcal{C}$ module category with distinguished object $ _{N} L^{2}(M,\phi)_{M}$. This category is a semisimple W* $\mathcal{C}$-module category (this follows from \cite[Corollary 4.5]{MR3687214}). We denote the associated connected W*-algebra object $\textbf{M}$. 

Then as explained above (see also \cite{MR3948170}), the associated W*-algebra object in $\text{Vec}(\mathcal{C})$ is given by 

$$\textbf{M}(a):=\text{Hom}_{N-M}(a\boxtimes_{N} L^{2}(M,\phi), L^{2}(M,\phi))$$

\noindent We are now in a position to define

$$\textbf{StdInv}(N\subseteq M):=(\mathcal{C}, \textbf{M})$$

\noindent 

We note that unlike in the finite index case, we make no explicit mention of the $M$-$M$ bimodules (or ``dual even part") in our formulation of the standard invariant. The reason is two-fold: on the one hand, the dual even part is determined by our data, and can be explicitly computed using a generalized tube algebra \cite{MR3801484, MR1966524}. On the other, for discrete subfactors the dual category is typically very difficult to work with using the standard tools of tensor category theory, and is usually non-semisimple and non-rigid. An illuminating example is the symmetric enveloping subfactor introduced by Popa \cite{MR1302385}. In this case, the dual category is the Drinfeld center $\mathcal{Z}(\text{Ind}(\mathcal{C}))$ and is generically extremely complicated \cite{MR3406647, MR3509018, MR3447719}. Even in relatively simple situations, the tensor product of objects is difficult to decompose.

Another important part of this story is the \textit{realization functor}. Given a connected W*-algebra object $\textbf{M}\in \text{Vec}(\mathcal{C})$, where $\mathcal{C}$ is a full tensor subcategory of bifinite, spherical bimodules the $\rm{II}_{1}$ factor $N$, then the realization $|\textbf{M}|$ is an irreducible, unimodular discrete extension of $N$ \cite{MR3948170}. The standard invariant gives a bijective correspondence between (isomorphism classes of) connected W*-algebra objects in $\mathcal{C}$ and discrete irreducible inclusions supported on $\mathcal{C}$. For a fixed $\mathcal{C}$, this result (and in particular the functorality of this correspondence) allows us to pass freely between the categorical world and the analytic world of discrete extensions.

For a finite index inclusion, the algebra object $\textbf{M}$ is \textit{compact} (i.e. dualizable) and may be equivalently reformulated as a Q-system (\cite{MR4079745}), which aligns with the standard categorical approach to finite index subfactors (\cite{Longo89, MR1966524, MR3308880, MR4419534}).

\begin{quest}(The standard invariant realization problem). Given a (tracial) standard invariant $(\mathcal{C}, \textbf{M})$ and a $\rm{II}_{1}$ factor $M$, does there exist an irreducible subfactor $N\subseteq M$ such that $\text{StdInv}(N\subseteq M)\cong (\mathcal{C}, \textbf{M})$? 
\end{quest}

Of course, tracial is not essential for this question but in this paper, we are mostly interested in the case of $\rm{II}_{1}$ factors so we restrict to the tracial case for this question.

To date, this question has only been directly addressed in the finite index case, as described in the introduction. Both constructing an explicit subfactor realizing a given standard invariant, or showing no realization exists, is very difficult in general. One might hope that if such a realization exists, features of a $\rm{II}_{1}$ factor (e.g. analytic properties or algebraic invariants) must be compatible with features of the standard invariant in question, since this would at least give some traction for \textit{obstructing} the realization of a given standard invariant in a given $\rm{II}_1$ factor. So far phenomena of this nature have been elusive.

One of the goals of this paper is to provide precisely such an obstruction for the infinite index, discrete case, which we carry out in the next section.

\subsection{Fundamental group invariant}
\label{modular operators}
For any connected W*-algebra object $\textbf{M}\in \text{Vec}(\mathcal{C})$, there are two canonically associated Hilbert space structures on all the finite dimensional spaces $\textbf{M}(a)$:

$$_{a}\langle f , g\rangle:=\textbf{M}(\text{coev}_{a})\left(m_{a,\overline{a}}(f\otimes j_{a}(g))\right)$$

and

$$\langle g | f\rangle_{a}:=\textbf{M}(\text{ev}^{*}_{a})(m_{\overline{a},a}(j_{a}(g)\otimes f ))$$

\noindent where $\text{coev}_{a}:\mathbbm{1}\rightarrow a\otimes \overline{a}$ and $\mathbbm{1}\rightarrow \overline{a}\otimes a$ and $\text{ev}_{a}: \overline{a}\otimes a\rightarrow \mathbbm{1}$ are standard solutions to the duality equations.\footnote{we can choose a unitary dual functor representing the canonical unitary spherical structure, see \cite{MR4133163}}. Note that the first inner product (which we call the left inner product) is conjugate linear in the second variable and linear in the first, while the second (which we call the right inner product) is conjugate linear in the first variable and linear in the second. Both are positive definite.

Then there is a unique positive operator $\Delta_{a}:\textbf{M}(a)\rightarrow \textbf{M}(a)$ (positive, in fact, with respect to both the left and right inner products) such that

$$_{a}\langle f, g\rangle=\langle g | \Delta_{a}(f)\rangle_{a}$$

The collection $\{\Delta_{a}:\mathbf{M}(a)\rightarrow \mathbf{M}(a)\}$ assemble into a natural endomorphsm on the functor $\mathcal{M}$, which we call the \textit{modular operator} on $\mathcal{M}$. Note $\Delta_{a}$ is diagonalizable for each $a \in \mathcal{C}$ and we  define

$$T(\textbf{M}):=\{\lambda\in \mathbbm{R}^{\times}_{+}\ :\ \lambda\ \text{is an eigenvalue for}\ \Delta_{a}\ \text{for some}\ a\in \mathcal{C}\}.$$

A connected W*-algebra object $\textbf{M}$ is \textit{tracial} if $T(\textbf{M})=\{1\}$, or equivalently, if for all $a\in \mathcal{C}$, $_{a}\langle f, g\rangle=\langle g | f\rangle_{a}$ (this can be taken as the definition of tracial for our purposes). If $(\mathcal{C}, \textbf{M})=\text{StdInv}(N\subseteq M, E)$ then $\textbf{M}$ is tracial if and only if $M$ is of type $\rm{II}_{1}$. In general, for any connected W*-algebra object, there is a canonical tracial subalgebra object $\textbf{M}_{tr}$ defined as a subfunctor by

$$\textbf{M}_{tr}(a):=\{v\in \textbf{M}(a)\ :\ \Delta_{a}(v)=v\}$$

\medskip

In the case $(\mathcal{C}, \textbf{M})=\text{StdInv}(N\subseteq M, E)$, let $M^{tr}$ denote the centralizer in $M$ of the state $\phi=\tau\circ E$. Then $M_{tr}$ is a $\rm{II}_{1}$ factor, and $N\subseteq M_{tr}$ is an irreducible, discrete unimodular inclusion (with the trace preserving conditional expectation). Let $\mathcal{C}_{tr}$ be the sub-tensor category of $\mathcal{C}$ tensor generated by irreducible summands $L^{2}(M_{tr})$. 

Since the modular operator of $M$ with respect to $\phi$ is the realization of the modular operator $\Delta$ associated to $\textbf{M}$ and $\phi$ is quasi-periodic, the centralizer of $\phi$ in $M$ is exactly the fixed points of of the modular operator in $M$. Thus we have 

$$(\mathcal{C}_{tr}, \textbf{M}_{tr})=\text{StdInv}(N\subseteq M_{tr}, E).$$

\begin{defn}\label{defineinvariant}
    Let $\textbf{M}$ be a tracial connected W*-algebra object in $\mathcal{C}$. Define $T_{0}(\textbf{M})$ to be the subset of $\mathbbm{R}^{\times}_{+}$ consisting of elements $\lambda\in T(\textbf{P})$, where $\mathbf{P}$ ranges over (isomorphism classes of) connected W*-algebra objects in $\text{Vec}(\mathcal{C})$ such that $\textbf{P}_{tr}\cong \textbf{M}$.
\end{defn}

Although $T_{0}(\textbf{M})$ is defined as a subset of $\mathbbm{R}^{\times}_{+}$, for the purposes of obtaining a realization obstruction, we can replace this with the subgroup generated by it. In all cases we analyze below, $T_{0}(\textbf{M})$ will already be a subgroup. This will be the invariant we use to obstruct realization of discrete subfactors.

Recall that if $M$ is a $\rm{II}_{1}$ factor, its fundamental group

$$\mathcal{F}(M):=\{t\in \mathbbm{R}^{\times}_{+}\ :\ M\cong M^{t}\}$$

\begin{prop}
\label{t0(m) included in f(m)}Let $N\subseteq M$ be an irreducible, discrete, unimodular inclusion of $\rm{II}_{1}$ factors with $\text{StdInv}(N\subseteq M)\cong (\mathcal{C},\textbf{M})$. Then $T_{0}(\textbf{M})\subseteq \mathcal{F}(M)$.
\end{prop}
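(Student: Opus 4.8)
The plan is to realize each $\lambda \in T_0(\mathbf{M})$ as a genuine isomorphism $M \cong M^\lambda$, built from a discrete extension of $N$ whose tracial part recovers $N \subseteq M$.

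First I would unpack the definition: fix $\lambda \in T_0(\mathbf{M})$. By Definition \ref{defineinvariant} there is a connected W*-algebra object $\mathbf{P} \in \text{Vec}(\mathcal{C})$ with $\mathbf{P}_{tr} \cong \mathbf{M}$ and with $\lambda$ an eigenvalue of the modular operator $\Delta^{\mathbf{P}}_a$ on some $\mathbf{P}(a)$. Since $\mathcal{C}$ is (realized as) a full tensor subcategory of bifinite spherical bimodules of the $\rm{II}_1$ factor $N$, I would apply the realization functor of \cite{MR3948170}: set $P := |\mathbf{P}|$, an irreducible, unimodular discrete extension $N \subseteq P$ with a canonical conditional expectation $E_P$ and canonical state $\phi_P = \tau \circ E_P$. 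The key functoriality input is that $\text{StdInv}(N \subseteq P) \cong (\mathcal{C}, \mathbf{P})$, and—by the discussion just preceding Definition \ref{defineinvariant}—that the centralizer $P^{\phi_P}$ has standard invariant $(\mathcal{C}_{tr}, \mathbf{P}_{tr}) = (\mathcal{C}_{tr}, \mathbf{M})$. But $(\mathcal{C}_{tr}, \mathbf{M})$ is, by hypothesis, $\text{StdInv}(N \subseteq M)$, and since the realization correspondence is a bijection (on isomorphism classes of discrete irreducible inclusions supported on the given category), we get $N \subseteq P^{\phi_P}$ isomorphic to $N \subseteq M$ as inclusions over $N$. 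In particular $P^{\phi_P} \cong M$.

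Next I would extract the scaling. The state $\phi_P$ is quasi-periodic (its modular operator, being the realization of $\Delta^{\mathbf{P}}$, is diagonalizable with point spectrum $T(\mathbf{P})$), so $P$ is a direct integral / direct sum over the eigenvalue decomposition, and $P^{\phi_P} = M$ sits inside $P$ as the centralizer. The number $\lambda \in T(\mathbf{P})$ being an eigenvalue means there is a nonzero eigenvector in the corresponding spectral subspace, i.e. a nonzero $v \in P$ with $\sigma^{\phi_P}_t(v) = \lambda^{it} v$, equivalently an element implementing a partial isometry between two projections $p, q$ in the centralizer $M = P^{\phi_P}$ with $\phi_P(q) = \lambda \cdot \phi_P(p)$ (this is the standard discrete-crossed-product picture: the spectral subspaces of a quasi-periodic state are $M$-$M$ bimodules inside $P$, and the eigenvalue records the ratio of the two traces). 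Since $M$ is a $\rm{II}_1$ factor, $pMp \cong M$ and $qMq \cong M$, while the partial isometry gives $pMp \cong qMq$; comparing normalized traces, $qMq \cong M^{\phi_P(q)/\phi_P(p)} = M^\lambda$. Hence $M \cong M^\lambda$, i.e. $\lambda \in \mathcal{F}(M)$. Running this over all $\lambda \in T_0(\mathbf{M})$ gives $T_0(\mathbf{M}) \subseteq \mathcal{F}(M)$.

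\textbf{Main obstacle.} The delicate point is the middle step: justifying that an eigenvalue $\lambda$ of the modular operator on $\mathbf{M}$-side genuinely produces a \emph{unitary} $M \cong M^\lambda$ rather than merely a partial isometry between arbitrarily small cut-downs, and that the bookkeeping of traces is exactly $\lambda$ (not $\lambda^{-1}$ or a product of eigenvalues). This requires carefully matching the two conventions—the categorical modular operator $\Delta_a$ defined via the left and right inner products on $\mathbf{M}(a)$, and the analytic modular operator of $\phi_P$ on $P$—and verifying that the realization functor intertwines them on the nose (this is asserted in the text just before Definition \ref{defineinvariant}, so it may be quotable). One must also confirm that the spectral subspace for $\lambda$, viewed as an $M$-$M$ bimodule, is nonzero and of the right finite index type so that the comparison of $\rm{II}_1$ factors is legitimate; connectedness of $\mathbf{P}$ is what guarantees $M$ is a factor and that all these cut-downs are isomorphic to $M$. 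Once those identifications are pinned down, the rest is the routine $\rm{II}_1$-factor amplification arithmetic.
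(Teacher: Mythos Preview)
Your argument is correct and follows essentially the same route as the paper: pick $\mathbf{P}$ with $\mathbf{P}_{tr}\cong\mathbf{M}$, realize it to a discrete extension $N\subseteq M\subseteq |\mathbf{P}|$, identify $M$ with the $\phi$-centralizer, and then read $\lambda$ off as a modular eigenvalue of $\phi$ on $|\mathbf{P}|$ to land in $\mathcal{F}(M)$. The only difference is cosmetic: the paper black-boxes the last step (``$M\subseteq |\mathbf{P}|$ is a discrete decomposition, hence modular eigenvalues lie in $\mathcal{F}(M)$'') by citing \cite{MR3948170}, whereas you unpack that mechanism via the spectral-subspace/partial-isometry argument---which is exactly what underlies the cited fact, so your ``main obstacle'' is already handled by that reference.
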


\begin{proof} For any $\lambda\in T_{0}(\textbf{M})$, let $\textbf{P}$ be a corresponding connected W*-algebra  object with $\lambda\in T(\textbf{P})$. Then applying the realization functor and using that $\textbf{P}_{tr}\cong \textbf{M}$, we have $N\subseteq M\subseteq |\textbf{P}|$. But $M\subseteq |\textbf{P}|$ is a discrete decomposition [Ref realization of alg ob paper] and thus the eigenvalues of $\Delta_{\phi}$ on $M$ are elements the fundamental group of $M$.  By \cite{MR3948170}, the $\Delta_{\phi}$ is the realization of the categorical $\Delta$, and thus $\lambda\in \mathcal{F}(M)$. 

\end{proof}

\begin{cor}
    If $(\mathcal{C},\textbf{M})$ is an abstract, irreducible standard invariant, $M$ is an $\rm{II}_{1}$ factor, and $T_{0}(\textbf{M})\not\subset \mathcal{F}(M)$, then there is no irreducible, unimodular subfactor $N\subseteq M$ with $\text{StdInv}(N\subseteq M)\cong (\mathcal{C}, \textbf{M})$.
\end{cor}

\begin{rem}
Given $N\subseteq M$, it is natural to ask precisely which subset of $\mathcal{F}(M)$ is $T_{0}(M)$, in terms of the subfactor? Given $\lambda\in \mathcal{F}(M)$, one can construct a type $\rm{III}$ factor $P$ and a state whose centralizer is precisely $M$, and whose modular operator has eigenvalues $\{\lambda^{n}: n\in \mathbbm{Z}\}$. We will call $\lambda$ \textit{admissible} if $P$ can be chosen such that the inclusion $N\subseteq P$ is also an irreducible, discrete inclusion such that $\langle L^{2}(P,\phi)\rangle\subseteq \langle L^{2}(M,\tau)\rangle$. Then $T_{0}(\text{StdInv}(N\subseteq M))$ is precisely the set of admissible $\lambda$. 
\end{rem}

\begin{rem}\label{fundgroupex}
Another way to identify $T_{0}(\text{StdInv}(N\subseteq M))\subseteq \mathcal{F}(M)$ is to consider all invertible $M$-$M$ bimodules that restrict as $N$-$N$ bimodules to direct sums of bimodules in the unitary tensor category $\mathcal{C}:=\langle _{N}L^{2}(M)_{N}\rangle $. Then take the left Murray-von Neumann dimensions, which is a subgroup of the fundamental group. It is easy to see (from standard constructions dating back to Connes realizing fundamental group elements as modular eigenvalues of type III extensions \cite{MR0341115}) that this recovers our subgroup of the fundamental group. This suggests a more natural formulation of our invariant directly in terms of invertible Hilbert bimodules of $\mathcal{M}$ internal to $\mathcal{C}$ that parallels the ordinary definition of fundamental group, but a theory of internal tensor products for W*-algebra objects has not yet appeared in the literature. Thus we found it expedient to use the modular eigenvalue formulation in this paper.
\end{rem}

\section{Computing centralizers for $TLJ(\delta)$ subfactors.}
\label{computing t0 for tlj}
For $0<q\le 1$, we consider the unitary tensor category $\text{TLJ}(q+q^{-1})$. We typically refer to the parameter $q+q^{-1}=\delta$. This category can be realized as the representation category of the compact quantum group $\text{SU}_{-q}(2)$. Connected W*-algebra objects in this category were classified by De Commer and Yamashita (\cite[Theorem 2.4]{MR3420332}). They correspond to \textit{fair and balanced $\delta$-graphs}.

\begin{defn}
Let $\Gamma$ be a locally finite directed graph (where multiple edges are allowed) equipped with a function $w:E(\Gamma)\rightarrow \mathbbm{R}^{\times}_{+}$. The pair $(\Gamma,w)$ is a fair and balanced $\delta$-graph if

\begin{itemize}
\item 
There exists an involution $E(\Gamma)\rightarrow E(\Gamma)$, $e\mapsto \overline{e}$,  where $\text{source}(e) = \text{ target}(\overline{e})$ and $\text{source}(\overline{e}) = \text{ target}(e)$ and  $w(e)w(\overline{e})=1$.
\item 
For every vertex $x\in V(\Gamma)$, $\sum_{s(e)=x}w(e)=\delta$
\end{itemize}

\end{defn}

The main result of \cite{MR3420332} is that semisimple module categories for $\text{TLJ}(\delta)$ are classified by fair and balanced $\delta$-graphs, and connected W*-algebra objects are parameterized by triples $(\Gamma,w,*)$ where $(\Gamma,w)$ is a fair and balanced $\delta$-graph and $*$ is a choice of distingushed vertex (of course taken up to the obvious equivalences on both sides).

In the following section, note that by a path in $\Gamma$ we mean a sequence of edges $e_1, ..., e_k$ such that $s(e_{i}) = t(e_{i-1})$, where the $e_i$'s need not be distinct. If $s(e_1) = t(e_k)$, we call the path a loop. A loop based at a vertex $v$ is a loop with $s(e_1) = v$. The (total) weight of a path/loop is $w(e_1)...w(e_k)$.   

For the sake of completeness, we explain here how to explicitly realize a connected W*-algebra object in $\text{TLJ}(\delta)$ from $(\Gamma,w,*)$. Recall we can present $\text{TLJ}(\delta)$ as the unitary Karoubi completion of the category whose objects are natural numbers, and morphisms from $m$ to $n$ are linear combinations of Temperley-Lieb diagrams, modulo isotopy and the deletion that closed loops count for a factor of $\delta$.

Thus by naturality, to define a W*-algebra object it suffices to define it as a functor on this ``pre-completed" version of $\text{TLJ}(\delta)$. Let $L_{n}$ be the set of (oriented) loops, based of the vertex $*$, of length $n$. Then

$$\textbf{G}(n):=\mathbbm{C}[L_{n}]$$

Now, we must define the actions of the morphisms (i.e. the cups and caps) in $TLJ(\delta)$ under the functor $\mathbb{G}$. To do this, we first note that for an object $n \in TLJ(\delta)$, we have cups and caps for each strand postion which we can depict graphically as follows:
$$
\begin{tikzpicture}[scale=1, baseline = -.3cm]
\draw(0:0.5) arc (0:180:0.5);
\node(none) at (0.6, -0.1){$i$};
\node(none) at ( 1, 0) {$=$};
\end{tikzpicture}
\begin{tikzpicture}[scale=1, baseline = -.1cm]
\draw (-3,0) to (-3,1);
\draw (-2.5,0) to (-2.5,1);
\draw (-2,0) to (-2,1);
\node(none) at (-1.5, 0.5) {$\cdots$};
\draw (-1,0) to (-1,1);
\draw(0:0.5) arc (0:180:0.5);
\draw (1,0) to (1,1);
\node(none) at (1.5, 0.5) {$\cdots$};
\draw (2,0) to (2,1);
\node(none) at (-3, -0.5) {1};
\node(none) at (-2.5, -0.5) {2};
\node(none) at (-2, -0.5) {3};
\node(none) at (-1, -0.5) {$i-1$};
\node(none) at (-0.4, -0.4) {$i$};
\node(none) at (1, -0.5) {$i+2$};
\node(none) at (2, -0.5) {$n$};
\node(none) at (-3, 1.5) {1};
\node(none) at (-2.5, 1.5) {2};
\node(none) at (-2, 1.5) {3};
\node(none) at (-1, 1.5) {$i-1$};
\node(none) at (1, 1.5) {$i$};
\node(none) at (2, 1.5) {$n-2$};
\end{tikzpicture}
\hspace{2em}
\begin{tikzpicture}[scale=1, baseline = -0.7cm]
\draw(180:0.5) arc (-180:0:0.5);
\node(none) at (0.6, -0.5){$i$};
\node(none) at ( 1, -0.2) {$=$};
\end{tikzpicture}
\begin{tikzpicture}[scale=1, baseline = -1.1cm]
\draw (-3,-1) to (-3,0);
\draw (-2.5,-1) to (-2.5,0);
\draw (-2,0) to (-2,-1);
\node(none) at (-1.5, -0.5) {$\cdots$};
\draw (-1,0) to (-1,-1);
\draw(180:0.5) arc (-180:0:0.5);
\draw (1,0) to (1,-1);
\node(none) at (1.5, -0.5) {$\cdots$};
\draw (2,0) to (2,-1);
\node(none) at (-3, -1.5) {1};
\node(none) at (-2.5, 0.5) {2};
\node(none) at (-2, 0.5) {3};
\node(none) at (-1, 0.5) {$i$};

\node(none) at (1, 0.5) {$i+3$};
\node(none) at (2, 0.5) {$n + 2$};
\node(none) at (-3, 0.5) {1};
\node(none) at (-2.5, -1.5) {2};
\node(none) at (-2, -1.5) {3};
\node(none) at (-1, -1.5) {$i$};
\node(none) at (1, -1.5) {$i + 1$};
\node(none) at (2, -1.5) {$n $};
\end{tikzpicture}
$$
From the pictures above, we see that $\mathbf{G}(\cup_i): \mathbb{C}[L_n] \to \mathbb{C}[L_{n+2}]$ and $\mathbf{G}(\cap_i): \mathbb{C}[L_n] \to \mathbb{C}[L_{n-2}]$. We will define these maps on a basis element $\ell = e_1e_2...e_n$.
\begin{equation}
\label{cup equation for g}
   \mathbf{G}(\cup_i) = \sum_{e \in E(\Gamma), s(e) = t(e_i)} w(e)^{\frac{1}{2}}(e_1, ..., e_i, e, \overline{e}, e_{i+1}, ..., e_n) 
\end{equation}
\begin{equation}
\label{cap equation for g}
    \mathbf{G}(\cap_i) = \delta_{e_i = \overline{e}_{i+1}} w(e_i)^{\frac{1}{2}}(e_1, ..., e_{i-1}, e_{i + 2}, ..., e_n)
\end{equation}
The coefficents of $w(e)^{\frac{1}{2}}$ ensure that these maps satisfy the relations of $TLJ(\delta)$. If $l=e_1 e_{2}\dots e_{n}\in L_{n}$ define $w(l)=\prod_{e_i\in l} w(e_{i})$. Also define $\overline{l}=\overline{e}_{n}\dots \overline{e}_{1}$. The $*$-structure on $\textbf{G}$ is defined by the conjugate linear extension of the map given on the loop basis by $j_{n}(l)=w(\overline{l})^{\frac{1}{2}}\overline{l}$.

Then by \cite[Proof of Proposition 6.8]{MR3948170} the eigenvalues of the $\Delta$-operator are the subgroup

$$T(\textbf{G}):=\{w(l)\ :\ l\in L_{n}\ \text{for some $n$}\}$$

\begin{defn}
    A fair and balanced $\delta$-graph $\Gamma$ is tracial if $T(\Gamma) = \{1\}$, that is, if $w(l) = 1$ for all loops $l$ in $L_n$ for every $n$. 
\end{defn}

\begin{ex}
\label{double chain graph intro} 
    Consider the following graph $\Gamma_1$, where $a + a^{-1} + b + b^{-1} = \delta$ (we can choose such $a$ and $b$ for any $\delta > 4$):
    $$\begin{tikzpicture}
        \node(none) (s) at (0,0){*} ;
        \node(none) (v)at (3,0){$\bullet$};
        \node(none) (w) at (-3,0){$\bullet$};
        \node(none) (x)at (6,0){$\bullet$};
        \node(none) (y) at (-6,0){$\bullet$};
        \node(none) (z) at (-7,0){$\cdots$};
        \node(none) (z) at (7,0){$\cdots$};
        \node(none) (1) at (-1.5,0.75){$a $};
        \node(none) (2) at (-1.5,1.25){$b $};
        \node(none) (3) at (1.5,0.75){$a $};
        \node(none) (4) at (1.5,1.25){$b $};

         \node(none) (1) at (-4.5,0.75){$a $};
        \node(none) (2) at (-4.5,1.25){$b $};
        \node(none) (3) at (4.5,0.75){$a $};
        \node(none) (4) at (4.5,1.25){$b $};
        
        \node(none) (1) at (-1.5,-0.75){$a^{-1}$};
        \node(none) (2) at (-1.5,-1.25){$b^{-1}$};
        \node(none) (3) at (1.5,-0.75){$a^{-1} $};
        \node(none) (4) at (1.5,-1.25){$b^{-1} $};
         
         \node(none) (1) at (-4.5,-0.75){$a^{-1}$};
        \node(none) (2) at (-4.5,-1.25){$b^{-1}$};
        \node(none) (3) at (4.5,-0.75){$a^{-1} $};
        \node(none) (4) at (4.5,-1.25){$b^{-1} $};
        
         \draw [->] (w) to [out=30,in=150] (s) ;
        \draw [->] (w) to [out=60,in=120] (s) ;
        \draw [->] (s) to [out=210,in=330] (w) ;
        \draw [->] (s) to [out=240,in=300] (w) ;
        
        \draw [->] (s) to [out=30,in=150] (v) ;
        \draw [->] (s) to [out=60,in=120] (v) ;
        \draw [->] (v) to [out=210,in=330] (s) ;
        \draw [->] (v) to [out=240,in=300] (s) ;

        \draw [->] (v) to [out=30,in=150] (x) ;
        \draw [->] (v) to [out=60,in=120] (x) ;
        \draw [->] (x) to [out=210,in=330] (v) ;
        \draw [->] (x) to [out=240,in=300] (v) ;

         \draw [->] (y) to [out=30,in=150] (w) ;
        \draw [->] (y) to [out=60,in=120] (w) ;
        \draw [->] (w) to [out=210,in=330] (y) ;
        \draw [->] (w) to [out=240,in=300] (y) ;
    \end{tikzpicture}$$

Note that in $\Gamma_1$, any loop $\ell$ based at * has some furthest point $v$ from *, and the vertices in the path from * to $v$ are exactly those in the return path from $v$ to *, appearing in reverse order.  We can commute factors in $w(\ell)$ to pair the weight of the edge $x \to y$ with the weight of the edge $y \to x$ for every pair of vertices $x,y$ in the path from * to $v$. The product of these weight is either $aa^{-1}$, $bb^{-1}$, $ab^{-1}$ or $ba^{-1}$. The first two expressions are 1, and the last two are inverses of each other, so $w(\ell) = (ab^{-1})^{k}$ for some integer $k$. So we can say $T(\Gamma_1) = \langle ab^{-1}\rangle$.
\end{ex}

\begin{ex}
    \label{single chain graph intro}
    Consider the following graph $\Gamma_2$, where $q + q^{-1} = \delta$ (we can find such $q$ for $\delta > 2$):
     $$\begin{tikzpicture}
        \node(none) (s) at (0,0){*} ;
        \node(none) (v)at (3,0){$\bullet$};
        \node(none) (w) at (-3,0){$\bullet$};
        \node(none) (x)at (6,0){$\bullet$};
        \node(none) (y) at (-6,0){$\bullet$};
        \node(none) (z) at (-7,0){$\cdots$};
        
        \node(none) (z) at (7,0){$\cdots$};
        \node(none) (1) at (-1.5,0.75){$q$};
        \node(none) (3) at (1.5,0.75){$q $};
         \node(none) (1) at (-4.5,0.75){$q$};
       `\node(none) (3) at (4.5,0.75){$q$};
        \node(none) (1) at (-1.5,-0.75){$q^{-1}$};
        \node(none) (3) at (1.5,-0.75){$q^{-1} $};
         \node(none) (1) at (-4.5,-0.75){$q^{-1}$};
        \node(none) (3) at (4.5,-0.75){$q^{-1} $};

         \draw [->] (w) to [out=30,in=150] (s) ;
        \draw [->] (s) to [out=210,in=330] (w) ;

        \draw [->] (s) to [out=30,in=150] (v) ;
        \draw [->] (v) to [out=210,in=330] (s) ;

        \draw [->] (v) to [out=30,in=150] (x) ;
        \draw [->] (x) to [out=210,in=330] (v) ;

         \draw [->] (y) to [out=30,in=150] (w) ;
        \draw [->] (w) to [out=210,in=330] (y) ;
    \end{tikzpicture}$$

See that $T(\Gamma_2) = \{1\}$, since each time we add an edge to a loop, the only way to get back to $*$ is to add the inverse edge (with inverse weight) to the loop as well. This means that the weight of the loop will be 1. 
\end{ex}
 \begin{remark}
     Note that if $\Gamma$ is a tracial fair and balanced $\delta$-graph, then its weighting $\omega$ on edges induces a well-defined weighting $w_V$ on the vertices of $\Gamma$ by labeling any vertex $v$ with the weight of any path from $*$ to $v$ ($\Gamma$ being tracial must necessarily mean all paths to $v$ have the same weight). This weighting has the property that if $e$ is the edge with souce $v_1$ and target $v_2$, then $w(e) = \frac{w_V(v_2)}{w_V(v_1)}$. The existence of a vertex weighting with this property is equivalent to a fair and balanced $\delta$-graph being tracial \cite{MR3948170}.
 \end{remark}

\subsection{Tracial covers of fair and balanced $\delta$-graphs}

\label{tracial cover subsection}

Associated to a fair and balanced $\delta$-graph $\Gamma$ with related $W^*$-algebra object $\textbf{M}$ is a \textit{tracial} W*-algebra object $\textbf{M}_{tr}$, which consists of the eigenvectors of the modular operator $\Delta$ with eigenvalue $1$. By the classification, this has an associated tracial fair and balanced $\delta$ graph representation, which we call $\Gamma_{tr}$. Our goal is to identify this graph from the data of the original graph. As we will see, $\Gamma_{tr}$ can be thought of as a ``covering space" for the graph $\Gamma$. We will describe an algorithmic presentation of $\Gamma_{tr}$.

\subsubsection{Constructing $\Gamma_{tr}$}\label{construction tracial cover}

For the rest of this section, we will assume that $\Gamma$ is a fair and balanced $\delta$-graph with a distinguished vertex $* \in V(\Gamma)$ and edge weighting $\omega : E(\Gamma) \rightarrow \mathbb{R}^{\times}_{>0}$. 

First, define the graph $\Gamma'$, where the vertices in $\Gamma'$ correspond to paths with initial point $*$ in $\Gamma$. Edges $\tilde{e}:p_{1}\rightarrow p_{2}$ correspond to edges $e: t(p_{1})\rightarrow t(p_{2})$ such that $p_{2}=p_{1}*e$. $w(\widetilde{e})=w(e)$. 

Note that $\Gamma^{\prime}$ is fair and balanced. Indeed, for the vertex $p_1$, we will have exactly one edge in $\Gamma'$ corresponding to each edge in $\Gamma$ whose source $t(p_1)$.

We can define an equivalence relation $\sim_{tr}$ on the vertices of $\Gamma'$ where $v_1 \sim v_2$ if the corresponding paths have the same target and  weight. 

\begin{defn}
    \label{tracial cover def}
    $\Gamma_{tr}$ is the graph where vertices are eqiuvalence classes of $\Gamma'$ under $\sim_{tr}$. We can index the vertices $\Gamma_{tr}$ corresponding to the equivalence class of paths with weight $\lambda$ and target in $\Gamma$ of $v$ as $[\lambda,v]$. For a vertex $[\lambda,v]$ in $V(\Gamma_{tr})$, the edges with source $[\lambda,v]$ and target $[\lambda^{\prime},v^{\prime}]$ correspond to edges $e\in E(\Gamma)$ with $s(e)=v$ and $t(e)=v^{\prime}$, and $w(e) =\frac{\lambda^{\prime}}{\lambda}$. The weight of the edge is simply its weight in $\Gamma$. We call $\Gamma_{tr}$ the tracial cover of the graph $\Gamma$. 
\end{defn}

Note that $\Gamma_{tr}$ is indeed well-defined. If we have some edge $[\lambda_{1}, v_{1}] \to [\lambda_2,{v_2}]$ in $\Gamma_{tr}$, by definition there is an edge $e: v_1 \to v_2$ with weight $\epsilon=\frac{\lambda_{2}}{\lambda_{1}}$ in $\Gamma$. Since every element in the class $[\lambda_1,{v_1}]$ ends at $v_1$, we can compose $e$ with any representative $p$ and get a path with target $v_2$ and weight $\lambda_2$. So for any representative of $[\lambda_1, v_1]$ we have an edge in $\Gamma'$ of weight $\epsilon$ to some representative of $[\lambda_2,{v_2}]$ and thus edges are well-defined.

\begin{prop}
$\Gamma_{tr}$ is a tracial fair and balanced $\delta$-graph. 
\end{prop}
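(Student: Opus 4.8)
The plan is to verify the two defining conditions of a fair and balanced $\delta$-graph for $\Gamma_{tr}$ (local finiteness together with the existence of the weight-reversing edge involution satisfying $w(e)w(\overline e)=1$, and the balancing condition $\sum_{s(e)=x}w(e)=\delta$ at every vertex), and then separately check that $T(\Gamma_{tr})=\{1\}$, i.e.\ that it is tracial. I would organize the argument around the following sequence of observations. First, note that the natural surjection $\pi\colon V(\Gamma_{tr})\to V(\Gamma)$ sending $[\lambda,v]\mapsto v$ is, at the level of edges, a local bijection: by the well-definedness remark just before the proposition, for each vertex $[\lambda,v]$ of $\Gamma_{tr}$ and each edge $e\in E(\Gamma)$ with $s(e)=v$ there is exactly one edge of $\Gamma_{tr}$ with source $[\lambda,v]$ lying over $e$ (namely the one to $[\,\lambda\cdot w(e),\,t(e)\,]$), and conversely every edge out of $[\lambda,v]$ arises this way. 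Hence the map on outgoing-edge sets $\{\tilde e : s(\tilde e)=[\lambda,v]\}\to\{e: s(e)=v\}$ is a bijection, and moreover it preserves weights by definition ($w(\tilde e)=w(e)$). Local finiteness of $\Gamma_{tr}$ and the balancing condition then follow immediately from those properties of $\Gamma$: $\sum_{s(\tilde e)=[\lambda,v]}w(\tilde e)=\sum_{s(e)=v}w(e)=\delta$.

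Next I would construct the edge involution on $\Gamma_{tr}$. Given $\tilde e\colon[\lambda,v]\to[\lambda',v']$ lying over $e\colon v\to v'$ in $\Gamma$ with $\lambda'=\lambda\, w(e)$, I claim the lift of $\overline e\colon v'\to v$ with source $[\lambda',v']$ is an edge $\overline{\tilde e}\colon[\lambda',v']\to[\lambda'',v]$ where $\lambda''=\lambda'\,w(\overline e)=\lambda\,w(e)w(\overline e)=\lambda$, using the relation $w(e)w(\overline e)=1$ from the fact that $\Gamma$ is fair and balanced. So $\overline{\tilde e}\colon[\lambda',v']\to[\lambda,v]$, and in particular its target is exactly the source of $\tilde e$; one checks $\overline{\overline{\tilde e}}=\tilde e$ from the uniqueness of lifts, and $w(\tilde e)w(\overline{\tilde e})=w(e)w(\overline e)=1$. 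This gives the required involution, completing the verification that $\Gamma_{tr}$ is a fair and balanced $\delta$-graph.

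Finally, for traciality I would show every loop in $\Gamma_{tr}$ based at any vertex has weight $1$; equivalently (by the remark characterizing tracial graphs via vertex weightings) that $\Gamma_{tr}$ admits a vertex weighting $w_V$ with $w(\tilde e)=w_V(t(\tilde e))/w_V(s(\tilde e))$. The candidate is the obvious one: $w_V([\lambda,v]):=\lambda$. This is well-defined precisely because in $\Gamma_{tr}$ the first coordinate is part of the label, and for any edge $\tilde e\colon[\lambda,v]\to[\lambda',v']$ over $e$ we have by construction $\lambda'=\lambda\,w(e)=\lambda\,w(\tilde e)$, so $w(\tilde e)=\lambda'/\lambda=w_V(t(\tilde e))/w_V(s(\tilde e))$. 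Hence $\Gamma_{tr}$ is tracial. Alternatively, and perhaps more transparently, a loop $\tilde e_1\cdots\tilde e_k$ in $\Gamma_{tr}$ based at $[\lambda,v]$ lies over a loop $e_1\cdots e_k$ in $\Gamma$ based at $v$, and tracking the first coordinate along the lift shows the terminal label is $\lambda\cdot w(e_1)\cdots w(e_k)$; since the lift is a loop this must again equal $\lambda$, forcing $w(e_1)\cdots w(e_k)=1$, which is exactly $w$ of the loop in $\Gamma_{tr}$.

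I do not expect any genuine obstacle here: the content of the proposition is essentially bookkeeping, and the only point requiring a little care is confirming that the edge involution is well-defined at the level of equivalence classes — that is, that lifting $\overline e$ lands back in the class $[\lambda,v]$ rather than some other class over $v$ — which is handled cleanly by the identity $w(e)w(\overline e)=1$. The mild subtlety worth stating explicitly is that $\Gamma_{tr}$ as defined has vertices genuinely indexed by pairs $[\lambda,v]$ (not by $v$ alone), so there is no ambiguity in these lifts; the bulk of the work was already done in the paragraph preceding the proposition statement establishing that edges of $\Gamma_{tr}$ are well-defined.
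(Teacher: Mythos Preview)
Your proposal is correct and follows essentially the same approach as the paper: you construct the edge involution by lifting $\overline{e}$ and use $w(e)w(\overline e)=1$ to see it lands back at $[\lambda,v]$, you deduce the $\delta$-balancing from the weight-preserving bijection between outgoing edges at $[\lambda,v]$ and at $v$, and you verify traciality via the vertex weighting $w_V([\lambda,v])=\lambda$. The paper does exactly these three steps (in a slightly different order and with a bit less detail on local finiteness and the alternative loop argument you include).
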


\begin{proof}

To show $\Gamma_{tr}$ is a fair and balanced $\delta$-graph, we first show that we have the required involution $E(\Gamma_{tr}) \to E(\Gamma_{tr})$. To do this, note any edge $e_{tr} :[\lambda_1,{v_1}] \to [\lambda_2,{v_2}]$ corresponds to an edge $e: v_1 \to v_2$ in $\Gamma$ with weight $w(e)$. Since $\Gamma$ is a fair and balanced $\delta$ graph, there exists an edge $\overline{e}: v_2 \to v_1$ with weight $w(e)^{-1}$. Now, this edge will induce an edge $[\lambda_2,{v_2}] \to [\lambda_1,{v_1}]$ of weight $w(e)^{-1}$ in $\Gamma_{tr}$, which we can choose as $\overline{e_{tr}}$, since $w(e_{tr}) = w(e)$ and so $w(e_{tr})w(\overline{e_{tr}}) = 1$ as desired. 

The edges with source $[\lambda,{v}]$ correspond   to edges with source $v$ in $\Gamma$, so the sum of their weights are the same. Since $\Gamma$ is a fair and balanced $\delta$-graph, so is $\Gamma_{tr}$.

To see that $\Gamma_{tr}$ is tracial, we show that the weight function admits a vertex weighting representation. Define $\nu([\lambda, v])=\lambda$. Then by definition, for any edge $e:[\lambda_{1},v_{1}]\rightarrow [\lambda_{2}, v_{2}]$, $w(e)=\frac{\lambda_{2}}{\lambda_{1}}=\frac{\nu(t(e))}{\nu(s(e))}$.

%To show that $G_{tr}$ is tracial, consider a simple loop (that is, one with no repeated vertices)  in $\ell$ in $G_{tr}$ and choose some vertex $[\lambda,{v}]$. Every path from $*$ to $[\lambda,{v}]$ has weight $\lambda$, and so every path from $[\lambda,{v}]$ to $*$ has weight $\lambda^{-1}$. Spliting $\ell$ at $[\lambda,{v}]$ breaks into two paths, one of the former type and one of the latter. So $w(\ell) = w(v)w(v)^{-1} = 1$. Thus $G_{tr}$ is tracial. 
\end{proof}

%We choose this construction because $G_{tr}$ has a meaningful relation to $G$ when we consider both graphs as $W^{*}$-algebra objects in $TLJ(\delta)$. We see this in the following theorem. 

\begin{thm}
Suppose $\mathbf{M}$ is a W*-algebra object in $TLJ(\delta)$ with fair and balanced $\delta$-graph $\Gamma$. Then $\textbf{M}_{tr}$ has fair and balanced $\delta$-graph $\Gamma_{tr}$
\end{thm}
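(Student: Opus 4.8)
The plan is to exhibit an explicit isomorphism of W*-algebra objects between $\mathbf{M}_{tr}$ (the fixed-point subfunctor of the modular operator $\Delta$ on $\mathbf{M} = \mathbf{G}_\Gamma$) and the concrete W*-algebra object $\mathbf{G}_{\Gamma_{tr}}$ built from the tracial cover via the loop-algebra recipe of Section \ref{computing t0 for tlj}. Since both objects live in $TLJ(\delta)$, by naturality and Karoubi completion it suffices to compare them as functors on the pre-completed category, i.e.\ to produce linear isomorphisms $\Phi_n : \mathbf{G}_{\Gamma_{tr}}(n) \xrightarrow{\ \sim\ } \mathbf{M}_{tr}(n)$ for each $n \in \mathbb{N}$ that intertwine the actions of all cups $\mathbf{G}(\cup_i)$ and caps $\mathbf{G}(\cap_i)$ and are compatible with the $*$-structures $j_n$.

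First I would understand $\mathbf{M}_{tr}(n)$ concretely. By \cite[Proof of Proposition 6.8]{MR3948170} the modular operator $\Delta$ on $\mathbf{G}_\Gamma(n) = \mathbb{C}[L_n]$ acts diagonally on the loop basis, multiplying a loop $\ell$ by its weight $w(\ell)$; hence $\mathbf{M}_{tr}(n) = \Delta_n$-fixed vectors $= \operatorname{span}\{\ell \in L_n : w(\ell) = 1\}$, the span of the \emph{balanced} loops based at $*$. So the key step is a bijection between balanced loops of length $n$ in $\Gamma$ and \emph{all} loops of length $n$ in $\Gamma_{tr}$ based at the distinguished vertex $[1,*]$. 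This bijection is essentially the path-lifting map from covering space theory: a loop $\ell = e_1 \cdots e_n$ in $\Gamma$ based at $*$ lifts uniquely to the path $\tilde\ell$ in $\Gamma_{tr}$ starting at $[1,*]$ whose $k$-th vertex is $[w(e_1\cdots e_k), t(e_k)]$; this lift is a loop (ends at $[1,*]$) exactly when $w(\ell) = 1$, and conversely every loop in $\Gamma_{tr}$ based at $[1,*]$ projects to a balanced loop in $\Gamma$. One checks this is a bijection $L_n^{\mathrm{bal}}(\Gamma) \leftrightarrow L_n([1,*], \Gamma_{tr})$, and moreover that it preserves edge weights strand-by-strand, so $w(\tilde\ell) = w(\ell) = 1$, consistent with $\Gamma_{tr}$ being tracial.

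Given this bijection, I define $\Phi_n$ on basis loops by $\Phi_n(\tilde\ell) = \ell$ and check that it intertwines the structure maps. For the cup $\mathbf{G}(\cup_i)$: in $\Gamma_{tr}$ it inserts a pair $(f, \bar f)$ with $s(f) = t(\tilde e_i)$ weighted by $w(f)^{1/2}$, and the edges $f$ out of the vertex $[\mu, t(e_i)]$ of $\Gamma_{tr}$ are in weight-preserving bijection with edges out of $t(e_i)$ in $\Gamma$ (this is exactly the ``fair and balanced'' verification in the Proposition preceding the theorem, that $\Gamma_{tr}$ has the same out-edge multiset at each vertex); so under $\Phi$ this matches $\mathbf{G}(\cup_i)$ on $\mathbb{C}[L_n^{\mathrm{bal}}(\Gamma)]$ term by term, and inserting $f\bar f$ keeps the loop balanced so we stay in $\mathbf{M}_{tr}$. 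For the cap $\mathbf{G}(\cap_i)$: the condition $\tilde e_i = \overline{\tilde e_{i+1}}$ in $\Gamma_{tr}$ holds iff $e_i = \overline{e_{i+1}}$ in $\Gamma$ \emph{and} the intermediate weight is consistent (automatic from the lift), with the same scalar $w(e_i)^{1/2}$; again deleting a cancelling pair preserves balancedness. Compatibility with $j_n$ follows because $\overline{\tilde\ell}$ lifts $\overline\ell$ and $w(\overline\ell) = 1$ when $w(\ell)=1$, so the reversal formulas match. Finally, $\mathbf{M}_{tr}$ is a W*-algebra object as a closed subobject of $\mathbf{M}$, and since $\Phi$ is a $*$-isomorphism onto it and $\mathbf{M}_{tr}$ is connected, by the De Commer--Yamashita classification \cite[Theorem 2.4]{MR3420332} it must be the one associated to $\Gamma_{tr}$.

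\textbf{Main obstacle.} The genuinely delicate point is the bookkeeping in the cap relation: a cap in $\Gamma_{tr}$ can only contract $\tilde e_i \tilde e_{i+1}$ when these are inverse \emph{as edges of $\Gamma_{tr}$}, which a priori is stronger than $e_i = \overline{e_{i+1}}$ in $\Gamma$ since it also requires the base vertices $[\mu, \cdot]$ to match up. I expect this to be automatic for loops arising as lifts (the intermediate weight after $e_1\cdots e_{i-1}$ determines everything downstream), but making this rigorous — i.e.\ verifying that the unique lift is compatible with all partial contractions and that $\Phi$ is genuinely well-defined and natural, not just a basis bijection — is where the real work lies, and is closely parallel to checking that path-lifting in a covering space is compatible with homotopies.
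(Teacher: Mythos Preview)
Your proposal is correct and follows essentially the same approach as the paper: both construct the bijection between weight-$1$ loops of length $n$ in $\Gamma$ and all loops of length $n$ in $\Gamma_{tr}$ based at $[1,*]$ via path-lifting, and then verify compatibility with the cup and cap generators of $TLJ(\delta)$. The ``main obstacle'' you single out---that contractibility $\tilde e_i = \overline{\tilde e_{i+1}}$ in $\Gamma_{tr}$ is equivalent to $e_i = \overline{e_{i+1}}$ in $\Gamma$ for lifted loops---is exactly the claim the paper isolates and proves in a dedicated paragraph, and your expectation that the intermediate-weight consistency is automatic for lifts is precisely how that argument goes.
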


\begin{proof}
Our strategy for showing this isomorphism is to first define a map between loops in $\Gamma$ of weight 1 (a basis for $\textbf{M}_{tr}$) and loops in $\Gamma_{tr}$ (a basis for the algebra object represented by $\Gamma_{tr}$) and show that this map is a bijection. We will then show that this map extends to an algebra ismorphism by showing that respects multiplcation (concatenation of loops) and the previous defined action of cups and caps. 

There is a map $\mu$ between weight 1 loops of length $n$ in $\Gamma$ to loops of length $n$ in $\Gamma_{tr}$ because every vertex $v^{\prime}$ in $\Gamma$ has exactly the same outgoing edges as any $[\lambda, v^\prime]$in $\Gamma_{tr}$. So $\mu$ will trace the loop from $\Gamma$ in $\Gamma_{tr}$. 

    To show that $\mu$ is injective, suppose we have two distinct loops $\ell_1$ and $\ell_2$ of weight 1 in $\Gamma$. At some point, these loops must differ by an edge. Say this occurs at a vertex $v$, where the partial weight of the loop is $\epsilon$. Say that the next edge of $\ell_1$  has weight $\delta_1$ and the next edge of $\ell_2$ has weight $\delta_2$. In $\Gamma_{tr}$, $\mu(\ell_1)$ will have an edge to $ [\epsilon, v]\to [\epsilon\delta_1, v_1]$  and $\mu(\ell_2)$ will have an edge $ [\epsilon, v]\to [\epsilon\delta_2, v_2]$. These are different vertices in $\Gamma_{tr}$, even if $v_1 = v_2$. So $\mu(\ell_1) \neq \mu (\ell_2)$ and $\mu$ is injective.
 
    To show that $\mu$ is surjective, consider a loop $\ell'$ in $\Gamma_{tr}$. We must construct a loop in $\Gamma$ that maps to $\ell'$. This is simple, as when we travel along an edge $[\lambda_1,{v_1}] \to [\lambda_2,{v_2}]$ in $\Gamma_{tr}$, we can just add the edge of weight $\frac{w_2}{w_1}$ from $v_1$ to $v_2$ into our loop in $\Gamma$. So $\mu$ is surjective and therefore a bijection. 

    It is fairly easy to see that $\mu$ respects multiplication. If $\ell_1 = e_1, e_2, ..., e_k$ (with vertices $v_1$,...,$v_{k-1}$) and $\ell_2 = f_1, ..., f_j$ (with vertices $w_1$,...,$w_{j-1}$), then $\ell_1 * \ell_2 = e_1...e_kf_1...f_j$. Now, $\mu(\ell_1*\ell_2)$ is the loop
    $$* \to [\omega(e_1), v_1] \to [ \omega(e_1)\omega(e_2), v_2] \to... \to [ \omega(e_1)...\omega(e_{k-1}), v_{k-1}] \to * \to $$
    $$ [w_1, \omega(f_1)] \to [w_2, \omega(f_1)\omega(f_2)] \to .. \to [w_{k-1}, \omega(f_1)...\omega(f_{j-1})] \to * $$

    We see that the first line of this loop is exactly $\mu(\ell_1)$ and the second is $\mu(\ell_2)$, so this loop is both $\mu(\ell_1 * \ell_2)$ and $\mu(\ell_1) * \mu(\ell_2)$ and $\mu$ respects multiplication in the algebra. 

    What remains is to show that the map $\mu$ is compatibile with cups and caps (and specfically the maps $\mathbf{G}(\cap_i)$ and $\mathbf{G}(\cup_i)$.   We start by showing compatibilty with $\mathbf{G}(\cup_i)$. Suppose $\ell  = * \to v_1 \to v_2 \to ... \to v_{k-1} \to v_k \to ... \to *$, where $e_i$ has source $v_{i-1}$ and target $v_i$. The loop $\mathbf{G}(\cup_{k-1})(\ell)$ in $G$ is  $\delta_{\overline{e_k} = e_{k+1}} (e_1, ..., e_{k-1}, e_{k+2})$. Since if $\overline{e_k} \neq e_{k+1}$ this loop is 0 and gets mapped to the trivial loop in $\Gamma_{tr}$, assume $\overline{e_k} = e_{k+1}$. This means that $v_{k-1} = v_{k+1}$ and $w(e_k) = w(e_{k+1})^{-1}$ and we can delete these edges and scale $\mathbf{G}(\cup_{k-1})(\ell)$ by $w(e_k)^{\frac{1}{2}}$. Now, $\mu(\mathbf{G}(\cup_{k-1})(\ell))$ is the loop $* \to [w(e_1),{v_1}] \to ... \to [w(e_1)w(e_2)...w(e_{k-1}),{v_{k-1}}] \to  [w(e_1)...w(e_{k-1})w(e_{k+2}),{v_{k+2}}] \to ... \to *$ with coefficent $w(e_k)^{\frac{1}{2}}$.

   Note $\mu(\ell) = * \to [w(e_1),{v_1}]\to [w(e_1)w(e_2),{v_2}] \to ... \to [w(e_1)...w(e_{k-1}),{v_{k-1}}] \to [w(e_1)...w(e_{k}),{v_k}] \to [ w(e_1)...w(e_{k+1}),{v_{k + 1}}] \to * $, with edge weights indentical to the $e_i$'s. Now, we must apply $\mathbf{G}_v(\cup_{k-1})(\ell)$. We will argue below that $\overline{e_{k}} = e_{k+1}$ in $\Gamma_{tr}$ exactly when $\overline{e_k} = e_{k+1}$ in $\Gamma$. So $\mathbf{G}(\cup_{k-1})(\mu(\ell)) = 0$ if and only if $\mu(\mathbf{G}(\cup_{k-1})(\ell)) = 0$. If $\overline{e_k} = e_{k+1}$, then we can delete these edges and scale by  $w(e_k)^{\frac{1}{2}}$ as before. We get the loop $* \to [w(e_1),{v_1}] \to [w(e_1)w(e_2),{v_2}] \to ... \to [w(e_1)...w(e_{k-1}),{v_{k-1}}] \to [w(e_1)... w(e_{k})w(e_{k+1})w(e_{k+2}),{v_{k+2}}] \to * $. But since $w(e_{k})^{-1} = w(e_{k+1})$, $ [w(e_1)... w(e_{k})w(e_{k+1})w(e_{k+2}),{v_{k + 2}}] $ \newline $= [w(e_1)...w(e_{k-1})w(e_{k+2}),{v_{k + 2}}]$ and this is exactly the loop we got in the previous paragraph. So $\mu$ is compatible with $\mathbf{G}(\cup_{k-1})$.
    \\
    
    We will now justify the claim that $\overline{\mu(e_{k})} = \mu(e_{k+1})$ in $\Gamma_{tr}$ exactly when $\overline{e_k} = e_{k+1}$ in $\Gamma$. In $\Gamma_{tr}$, $\mu(e_k)$ has source $[w(e_1)...w(e_{k-1}), v_{k-1}]$ and target $[w(e_1)...w(e_k), v_{k}]$. Likewise, $\mu(e_{k+1})$ has source $[w(e_1)...w(e_{k}), v_{k}]$ and target $[w(e_1)...w(e_{k+1}, v_{k+1}]$. If $\overline{e_k} = e_{k+1}$, then $v_{k+1} = v_{k-1}$ and $w(e_k) =  w(e_{k+1})^{-1}$. So $[w(e_1)...w(e_{k-1}), v_{k-1}] = [w(e_1)...w(e_{k+1}), v_{k+1}] $ and so $\mu(e_k) = \overline{\mu(e_{k+1})}$. Now if $\mu(e_k) = \overline{\mu(e_{k+1})}$, this implies that $[w(e_1)...w(e_{k-1}), v_{k-1}] = [w(e_1)...w(e_{k+1}), v_{k+1}] $. These vertices are only equal if $v_{k+1} = v_{k-1}$ and $w(e_1)...w(e_{k-1}) = w(e_1)...w(e_{k+1})$ (which further implies $w(e_k)w(e_{k+1})$). Thus, $e_k = \overline{e_{k+1}}$. This justifies our previous claim.
    
    To show that our bijection is compatible with the cup operation, we continue to consider the loop $\ell = * \to v_1 \to v_2 \to ... \to v_{k-1} \to v_k \to ... \to *$, where $e_i$ has source $v_{i-1}$ and target $v_i$. So $$\mathbf{G}(\cap_k)(\ell)= \sum_{s(e) = v_k} w(e)^{\frac{1}{2} }(* \to v_1 \to v_2 \to ... \to v_k \to t(e) \to v_k \to ...*)$$

    Now,  $\mu (\mathbf{G}(\cap_k)(\ell)$ is  $$\sum_{s(e) = [w_k, v_k]} w(e)^{\frac{1}{2} }(\ell_e)$$, where $w_k = w(e_1)...w(e_k)$  and  $$ \ell_e  = * \to  [w(e_1),{v_1}] \to ... \to [w(e_1)...w(e_k),{v_k}] \to [ w(e_1)...w(e_k)w(e),{t(e)}] \to $$ $$ [v_k, w(e_1)...w(e_k)w(e)w(\overline{e}) = w(e_1)...w(e_k),{v_k}] \to [w(e_1)...w(e_{k+ 1}),{v_{k+1}}] \to ... \to *, $$ . 
    \\

    The image of $\ell$ under $\mu$ is  $* \to [w(e_1),{v_1}] \to  [w(e_1)w(e_2),{v_2}] \to ... \to [w(e_1)...w(e_{k-1}),{v_{k-1}}] \to [w(e_1)...w(e_{k}),{v_k}] \to [w(e_1)...w(e_{k+1}),{v_{k+1}}] \to * $, with edge weights correspoding to weights of $e_i$.  
    We now must consider the sum 
    $$  \sum_{s(e) = [w(e_1)...w(e_k), v_k]} w(e)^{\frac{1}{2} }\ell_e$$ where an edge $e$ has target $[w(e_1)...w(e_k)w(e), t]$ and $$ \ell_e  = * \to  [w(e_1),{v_1}] \to ... \to [w(e_1)...w(e_k),{v_k}] \to [ w(e_1)...w(e_k)w(e),t] \to $$ $$ [ w(e_1)...w(e_k)w(e)w(e)^{-1} = w(e_1)...w(e_k),{v_k}] \to [w(e_1)...w(e_{k+ 1}),{v_{k+1}}] \to ... \to *, $$
    Notice that these loops are the same as the ones previous constructed. Furthermore, we have exactly one edge with source $[w(e_1)...w(e_k), v_k]$ for every edge with source $v_k$ in $\Gamma$, and these edges have indentical weights. So the two summations we have constructed are equal and $\mu$ is compatible with the cap operation.   

    So our map $\mu$ is compatible with all algebra operations is therefore an isomorphism of algebra objects as desired. 
    
\end{proof}

Notice that the weight of loops in $\Gamma$ correspond to eigenvalues of the $\Delta$-operator, this theorem tells us that the relationship between $\Gamma$ and $\Gamma_{tr}$ is equivalent to that between $\mathbf{M}$ and $\mathbf{M}^{tr}$. 

\begin{ex}
  We consider this construction for the graph $\Gamma_1$ from example \ref{double chain graph intro}. In the first step, we will construct the following graph  $\Gamma'$ (here, and onward through this example, we assume that every drawn edge has an opposite edge of inverse weight without explicity drawing the edge):

    $$
    \begin{tikzpicture}
     \node(none) (s) at (0,0){*} ;
     \node(none) (a) at (1, 0){$\bullet$};
     \node(none) (a-1) at (-1, 0){$\bullet$};
     \node(none) (b) at (0, 1){$\bullet$};
     \node(none) (b-1) at (0, -1){$\bullet$};

     \draw[->] (s) to (a);
     \draw[->] (s) to (a-1);
     \draw[->] (s) to (b);
     \draw[->] (s) to (b-1);

    \node(none) (aa) at (2,0) {$\bullet$};
    \node(none) (ab) at (2,0.75) {$\bullet$};
    \node(none) (ab-1) at (2,-0.75) {$\bullet$};
    \node(none) (a-1a-1) at (-2,0) {$\bullet$};
    \node(none) (a-1b) at (-2,0.75) {$\bullet$};
    \node(none) (a-1b-1) at (-2,-0.75){$\bullet$};
    \node(none) (bb) at (0,2) {$\bullet$};
    \node(none) (ba) at (0.75,2) {$\bullet$};
    \node(none) (ba-1) at (-0.75,2) {$\bullet$};

    \node(none) (b-1b-1) at (0,-2) {$\bullet$};
    \node(none) (b-1a) at (0.75,-2) {$\bullet$};
    \node(none) (b-1a-1) at (-0.75,-2) {$\bullet$};

     \draw[->] (a) to (aa);
     \draw[->] (a) to (ab);
     \draw[->] (a) to (ab-1);
     
     \draw[->] (a-1) to (a-1a-1);
     \draw[->] (a-1) to (a-1b);
     \draw[->] (a-1) to (a-1b-1);

    \draw[->] (b) to (bb);
     \draw[->] (b) to (ba);
     \draw[->] (b) to (ba-1);
     
     \draw[->] (b-1) to (b-1b-1);
     \draw[->] (b-1) to (b-1a);
     \draw[->] (b-1) to (b-1a-1);

    \node(none) (aa...) at (2.5,0) {$\cdots$};
    \node(none) (ab...) at (2.5,0.75) {$\cdots$};
    \node(none) (ab-1...) at (2.5,-0.75) {$\cdots$};
    \node(none) (a-1a-1...) at (-2.5,0) {$\cdots$};
    \node(none) (a-1b...) at (-2.5,0.75) {$\cdots$};
    \node(none) (a-1b-1...) at (-2.5,-0.75){$\cdots$};
    \node(none) (bb...) at (0,2.5) {$\vdots$};
    \node(none) (ba...) at (0.75,2.5) {$\vdots$};
    \node(none) (ba-1...) at (-0.75,2.5) {$\vdots$};

    \node(none) (b-1b-1...) at (0,-2.2) {$\vdots$};
    \node(none) (b-1a...) at (0.75,-2.2) {$\vdots$};
    \node(none) (b-1a-1...) at (-0.75,-2.2) {$\vdots$};

    \node(none) at (-0.2, 0.5) {$a$};
    \node(none) at (0.4, -0.4) {$a^{-1}$};
    \node(none) at (-0.5, -0.3) {$b^{-1}$};
    \node(none) at (0.5, 0.3) {$b$};
    \end{tikzpicture}
    $$
When we take the quotient of this graph described, we see that ${\Gamma_1}_{tr}$ is the following graph. 

$$
\begin{tikzpicture}
    \node(none) (s) at (0,0){*} ;
     \node(none) (a) at (1.5, 0){$\bullet$};
     \node(none) (a-1) at (-1.5, 0){$\bullet$};
     \node(none) (b) at (0, 1.5){$\bullet$};
     \node(none) (b-1) at (0, -1.5){$\bullet$};
     \node(none) (ab) at (1.5,1.5){$\bullet$};
     \node(none) (a-1b) at (-1.5,1.5){$\bullet$};
     \node(none) (ab-1) at (1.5,-1.5){$\bullet$};
     \node(none) (a-1b-1) at (-1.5,-1.5){$\bullet$};

      \draw[->] (s) to (a);
     \draw[->] (s) to (a-1);
     \draw[->] (s) to (b);
     \draw[->] (s) to (b-1);
      \draw[->] (a) to (ab);
     \draw[->] (a) to (ab-1);
     \draw[->] (a-1) to (a-1b-1);
    \draw[->](a-1) to (a-1b);
     \draw[->] (b) to (ab);
     \draw[->] (b-1) to (ab-1);
     \draw[->] (b) to (a-1b);
     \draw[->] (b-1) to (a-1b-1);

    \node(none) (aa...) at (2,0) {$\cdots$};
    \node(none) (ab...) at (2,1.5) {$\cdots$};
    \node(none) (ab-1...) at (2,-1.5) {$\cdots$};
    \node(none) (a-1a-1...) at (-2,0) {$\cdots$};
    \node(none) (a-1b...) at (-2,1.5) {$\cdots$};
    \node(none) (a-1b-1...) at (-2,-1.5){$\cdots$};
    \node(none) (bb...) at (0,2) {$\vdots$};
    \node(none) (ba...) at (1.5,2) {$\vdots$};
    \node(none) (ba-1...) at (-1.5,2) {$\vdots$};

    \node(none) (b-1b-1...) at (0,-2) {$\vdots$};
    \node(none) (b-1a...) at (1.5,-2) {$\vdots$};
    \node(none) (b-1a-1...) at (-1.5,-2) {$\vdots$};

    \node(none) at (-0.2, 0.5) {$a$};
    \node(none) at (0.4, -0.4) {$a^{-1}$};
    \node(none) at (-0.5, -0.3) {$b^{-1}$};
    \node(none) at (0.5, 0.3) {$b$};
\end{tikzpicture}$$
\end{ex}

\subsection{Actions of Groups on Tracial Graphs}
\label{actions of groups on graphs}
Now, consider a tracial fair and balanced $\delta$-graph $\Gamma$ and group $H \leq \mathbb{R}^{*}$. Recall that since $\Gamma$ is tracial, it has a vertex weighting $ w_V: V(\Gamma) \to \mathbb{R}^{\times}_{>0}$ where $w(e) = \frac{w_V(t(e))}{w_V(s(e))}$ Suppose $H$ has an action on the vertices of $\Gamma$ so that $w_V(h \cdot v) = h*w_V(v)$ for every $v \in V(\Gamma)$. We will construct a graph $\Gamma^{H}$ whose tracial cover is $\Gamma$.

\begin{defn}
\label{gamma_H}
    If $\Gamma$ and $H$ are as above, $\Gamma^{H}$ is the graph whose vertices are orbits of $V(G)$ under the action of $H$. For edges, take an orbit $[x]$ and pick a  representative $x$. There is one edge $[x] \to [y]$ for every edge $x \to y^\prime \in [y]$ in $\Gamma$, with the same weight as the edge in $\Gamma$. 
\end{defn}

The edges of $\Gamma^{H}$ are well-defined. If we have some other representative $x' = h' \cdot x \in [x]$, any edge $x' \to y'$ in $\Gamma$ for $y' \in [y]$ will correspond to an edge $x \to h^{'-1}y'$ in $\Gamma$, since the action of $H$ on $\Gamma$ preserves adjaceny. The one-to-one correspondence between  edges with source $x$ in $\Gamma$ and  edges with source $[x]$ in $\Gamma^{H}$,so $\Gamma^{H}$ is a fair and balanced $\delta$-graph.

However, $\Gamma^{H}$ is not a  tracial graph. If we have edges $e_1, e_2: [x] \to [y]$  in $\Gamma^{H}$, they correspond to two distinct edges $x \to y_1$, $x \to y_2$ in $\Gamma$, where $y_1 \neq y_2$. Therefore $y_2 = h \cdot y_1$ for some $h \neq 1 \in H$.  So in $\Gamma^{H}$, $w(e_1) \neq w(e_2)$ since $w(e_1) = \frac{w_V(y_1)}{w-V(x)}$ and $w(e_2) = \frac{w_V(y_2)}{w(x)} = \frac{h * w_V(y_1)}{w_V(x)}$. 
\\

Because we now have a non-tracial graph $\Gamma^{H}$. We can follow the procedure outlined in section \ref{construction tracial cover} to find it's tracial cover.

\begin{thm}
    The tracial cover of $\Gamma^{H}$ is $\Gamma$.
\end{thm}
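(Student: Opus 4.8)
The plan is to exhibit an explicit isomorphism of fair and balanced $\delta$-graphs between $(\Gamma^H)_{tr}$ and $\Gamma$, using the vertex-weighting description of both sides. Recall that $\Gamma$ is tracial with vertex weighting $w_V$, and that by Definition \ref{gamma_H} the vertices of $\Gamma^H$ are the $H$-orbits $[x]$ of $V(\Gamma)$, with one edge $[x]\to[y]$ in $\Gamma^H$ for each edge $x\to y'$ of $\Gamma$ with $y'\in[y]$. Fix the basepoint $*$ of $\Gamma^H$ to be the orbit $[*_\Gamma]$ of the basepoint of $\Gamma$, and normalize $w_V(*_\Gamma)=1$; since $w_V(h\cdot v)=h\cdot w_V(v)$, the set of possible weights of a vertex in the orbit $[v]$ is exactly $H\cdot w_V(v)$.

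First I would compute $(\Gamma^H)_{tr}$ from the algorithm of \S\ref{construction tracial cover}. A vertex of $(\Gamma^H)'$ is a path $p$ in $\Gamma^H$ starting at $[*_\Gamma]$; such a path lifts (uniquely, once we fix a lift of its initial vertex to $*_\Gamma$) to a path $\tilde p$ in $\Gamma$ starting at $*_\Gamma$, because at each step an edge out of $[x]$ in $\Gamma^H$ with a chosen representative $x$ corresponds to a unique edge out of $x$ in $\Gamma$. The weight of $p$ in $\Gamma^H$ equals the weight of $\tilde p$ in $\Gamma$, which (by traciality of $\Gamma$) is $w_V(t(\tilde p))/w_V(*_\Gamma)=w_V(t(\tilde p))$. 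Two paths $p_1,p_2$ are identified under $\sim_{tr}$ iff they have the same target orbit $[v]$ in $\Gamma^H$ and the same weight $\lambda$; by the previous sentence this says their lifts have weights $w_V(t(\tilde p_i))=\lambda$ and targets in $[v]$. Since $w_V$ restricted to the orbit $[v]$ is injective (as $H$ acts freely on weights: $h\cdot w_V(v)=w_V(v)\Rightarrow h=1$), specifying $\lambda\in H\cdot w_V(v)$ pins down the target of the lift to a single vertex $u\in[v]$ with $w_V(u)=\lambda$. Hence the map sending the $\sim_{tr}$-class $[\lambda,[v]]$ to that unique $u\in V(\Gamma)$ is a well-defined bijection on vertices; note every vertex $u\in V(\Gamma)$ arises, since $\Gamma$ is connected so there is a path in $\Gamma$ from $*_\Gamma$ to $u$, which descends to a path in $\Gamma^H$.

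Next I would check this bijection intertwines the edge structures, with matching weights. An edge in $(\Gamma^H)_{tr}$ from $[\lambda,[v]]$ to $[\lambda',[v']]$, by Definition \ref{tracial cover def}, is an edge $[v]\to[v']$ in $\Gamma^H$ of weight $\lambda'/\lambda$; by the construction of $\Gamma^H$ this is an edge $u\to u''$ in $\Gamma$ for some $u''\in[v']$, and its weight $\lambda'/\lambda=w_V(u'')/w_V(u)$ forces $w_V(u'')=\lambda'$, i.e. $u''$ is exactly the vertex corresponding to $[\lambda',[v']]$. Conversely every edge of $\Gamma$ out of $u$ gives such an edge. So the correspondence is a bijection on edges preserving source, target, and weight; it carries the involution $e\mapsto\overline e$ of $\Gamma$ to that of $(\Gamma^H)_{tr}$ (both determined by reversing an edge and inverting its weight), and preserves the basepoint. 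This is an isomorphism of fair and balanced $\delta$-graphs, proving $(\Gamma^H)_{tr}\cong\Gamma$.

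The main obstacle I anticipate is the bookkeeping around lifts of paths and the implicit choice of representatives in the definition of $\Gamma^H$: one must verify that the lift $\tilde p$ of a path $p$ in $\Gamma^H$ is genuinely well-defined once the starting vertex is fixed, and that the weight and target of $\tilde p$ depend only on the $\sim_{tr}$-class of $p$, not on the path itself — this is where traciality of $\Gamma$ (all paths to a given vertex have equal weight) and freeness of the $H$-action on the weight values are both essential. A secondary point to be careful about is whether $H$ acts freely on $V(\Gamma)$ itself; only freeness on weights is needed, and that follows from the hypothesis $w_V(h\cdot v)=h\cdot w_V(v)$ together with $H\leq\mathbb{R}^\times_{>0}$ acting by multiplication, so a stabilizer element would fix a positive real and hence be $1$.
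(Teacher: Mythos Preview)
Your proposal is correct and follows essentially the same strategy as the paper: both arguments set up a bijection between $V(\Gamma)$ and $V((\Gamma^H)_{tr})$ via $v\leftrightarrow[w_V(v),\,\mathrm{orb}(v)]$, then check that edges and weights match. You build the map in the direction $(\Gamma^H)_{tr}\to\Gamma$ using path lifting, while the paper builds it as $\Gamma\to(\Gamma^H)_{tr}$ directly; your explicit observation that $w_V$ is injective on each $H$-orbit (because $H$ acts freely on weights) is exactly the mechanism the paper uses implicitly for injectivity, so the two arguments are the same in substance.
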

\begin{proof}
First, we show that for $v \in V(\Gamma)$, that there is a well defined map $v \mapsto [w_V(v),{orb(v)}] \in V(\Gamma^{H}_{tr})$ . First, see that under the action of $H$, $v$ corresponds to a vertex $orb(v) \in \Gamma^{H}$. Now, $v$ having a weight of $w_V(v)$ in $\Gamma$ means there is a path $* \to ... \to v$ in $\Gamma$ with weight $w_V(v)$ and so there is a path $orb(*) \to ... \to orb(v)$ with weight $w_V(v)$ in $\Gamma^{H}$. This means $[w_V(v),{orb(v)}]$ is in $\Gamma_{tr}^{H}$.  

To show that this map is injective, consider vertices $v_1$ and $v_2$ in $\Gamma$.Either $orb(v_1) = orb(v_2)$, in which case $w(v_1) \neq w(v_2)$,  or $orb(v_1) \neq orb(v_2)$. Thus, $[w(v_1),{orb(v_1)}] \neq [w(v_2) ,{orb(v_2)}]$. To show that is surjective, see that a vertex in $\Gamma_{tr}^{H}$ is denoted by $[w,v]$ for $v \in V(\Gamma^{H})$ (where $v = orb(v^\prime)$ for $v^\prime \in \Gamma$. So there exists a path of weight $w$ from $orb(*) \to orb(v') \in \Gamma^{H}$. Further, there is a  path in $\Gamma$ of weight $w$ from $*$ to some vertex $v'' \in orb(v')$ (remember $v''$ is a vertex of $\Gamma$). We will then have $w_V(v'') = w$ and so $v''$ will map to $[w,{orb(v')}]$ under our constructed map. 
\\

If we have an edge $v_1 \to v_2$ in $\Gamma$ (which necessarily has weight $\frac{w_V(v_1)}{w_V(v_2)}$), we have an edge $orb(v_1) \to orb(v_2)$ in $\Gamma^{H}$ of the same weight.  This in turn induces an edge of weight $\frac{w(v_2)}{w(v_1)}$ in $\Gamma^{H}_{tr}$ from $[w_V(v_1), orb(v_1)] \to [w_V(v_2), orb(v_2)]$. Likewise, if we have an edge $[w_V(v_1), orb(v_1)] \to [w_V(v_2), orb(v_2)]$, it implies the existence of an edge of weight $\frac{w_V(v_2)}{w_V(v_1)}$ from $orb(v_1)$ to $orb(v_2)$ in $\Gamma^{H}$. By construction, this means that there exists an edge of weight $\frac{w(v_2)}{w(v_1)}$ from $v_1$ to some vertex $v \in orb(v_2)$. Now $v$ must have weight $w_V(v_1)\frac{w_V(v_2)}{w_V(v_1)} = w_V(v_2)$. So $v = v_2$, since every vertex in an orbit has a distinct weight. Because the map we have constructed is bijective and respects edges, it is a graph isomorphism and $\Gamma_{tr}^{H} \cong \Gamma$.

 \end{proof}

\subsubsection{Recovering a Graph from its Tracial Cover}
\label{sub: recovering  a graph}
A special case of the above process allows us to recover any graph $\Gamma$ from its tracial cover $\Gamma_{tr}$ and a particular action of $T(\Gamma)$ on $\Gamma_{tr}$. To do this, see that the group $T(\Gamma)$ has an action on the graph $\Gamma_{tr}$, where the action of a weight $w(\ell)$ on $\Gamma_{tr}$  sends $[\omega,{v}]$ to $[(\omega \cdot w(\ell)),{v}]$ (in other words, it sends the equivalence class of paths $[p]$ to the equivalence class $[p * \ell]$ for any loop $\ell$ with weight $w(\ell)$, where $p * \ell$ first follows the edges of $\ell$, then of $p$.). This action is well-defined, since precomposing by $\ell$ does not affect the endpoint of the path, and scales all $p' \sim p$ by the same weight.  
\begin{defn}
    If $p_1$ has vertices $v_1, ..., v_k$ and edge weight $\epsilon_1, ..., \epsilon_{k-1}$, then $rev(p_1)$ is the path with vertices $v_k, ..., v_1$ and edges weights $e_{k-1}^{-1}, ..., e_1^{-1}$.
\end{defn}
\begin{lem}
\label{orbit end points}
    Under this action, $[w_1, t_1]$ and $[w_2,t_2]$ share an orbit if and only if $t_1 = t_2$ are paths with the same end point in $\Gamma$.
\end{lem}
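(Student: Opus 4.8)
The plan is to prove the two implications separately. The forward direction is immediate from the shape of the action, and the backward direction requires exhibiting one explicit loop.

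For the forward implication, suppose $[w_2,t_2]$ lies in the $T(\Gamma)$-orbit of $[w_1,t_1]$, say $[w_2,t_2]=w(\ell)\cdot[w_1,t_1]$ for some loop $\ell$ based at $*$. By the description of the action recalled above, $w(\ell)\cdot[w_1,t_1]=[w_1\cdot w(\ell),t_1]$; in particular the second coordinate is unchanged, so $t_2=t_1$. (As a byproduct, $w_2=w_1\cdot w(\ell)$, so $w_2 w_1^{-1}\in T(\Gamma)$.)

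For the backward implication, assume $t_1=t_2=:v$. Choose representative paths $p_1,p_2$ from $*$ to $v$ in $\Gamma$ with $w(p_1)=w_1$ and $w(p_2)=w_2$; these exist precisely because $[w_i,v]$ is a vertex of $\Gamma_{tr}$. Using the edge involution $e\mapsto\overline e$ of the fair and balanced $\delta$-graph $\Gamma$, form $\rev(p_1)$, a path from $v$ to $*$; since $w(\overline e)=w(e)^{-1}$ for every edge, the total weight of $\rev(p_1)$ is $w_1^{-1}$. Then $\ell:=\rev(p_1)*p_2$ (first follow $p_2$ from $*$ to $v$, then $\rev(p_1)$ from $v$ back to $*$) is a loop based at $*$ of weight $w_2 w_1^{-1}$, so $w_2 w_1^{-1}\in T(\Gamma)$. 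Acting by this element sends $[w_1,v]=[p_1]$ to $[p_1*\ell]$, and $p_1*\ell$ has target $v$ and weight $w_1\cdot(w_2 w_1^{-1})=w_2$, hence $[p_1*\ell]=[w_2,v]$. Thus $[w_1,t_1]$ and $[w_2,t_2]$ share an orbit, completing the proof of Lemma \ref{orbit end points}.

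The content here is light; the only points requiring care are the paper's concatenation conventions (that $p*\ell$ traverses $\ell$ first, and that total weights multiply irrespective of order) and the use of the involution on $E(\Gamma)$ to reverse a path while inverting its total weight — this is exactly the fact making $T(\Gamma)$ closed under inverses, and it is what guarantees the element $w_2 w_1^{-1}$ realizing the orbit equivalence actually lies in $T(\Gamma)$. Well-definedness of the action on $\sim_{tr}$-classes has already been checked above, so no further verification is needed. I do not anticipate a genuine obstacle; the main thing to get right is keeping the orientation and order bookkeeping consistent with Definition \ref{tracial cover def}.
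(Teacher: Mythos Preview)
Your proposal is correct and follows essentially the same approach as the paper: both directions are argued the same way, and in particular the backward implication uses the identical loop $\ell=\rev(p_1)*p_2$ of weight $w_2 w_1^{-1}$ to exhibit the orbit equivalence. Your extra attention to the concatenation convention and the role of the edge involution is accurate and matches the paper's conventions.
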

\begin{proof}
  First, assume that $[w_1, t_1]$ and $[w_2,t_2]$ share an orbit under the action of $T(\Gamma)$. This means $[w_2, p_2] = \epsilon \cdot [w_1, p_1]$ for some $\epsilon \in T(\Gamma)$ and that there exists a loop $\ell$ with $w(\ell) = \epsilon$ and a path $p_1$ in of weight $w_1$ ending at $t_1$  such that  $p_1 * \ell$ is a path of weight $w_2$ ending at $t_2$. This implies that $t_1 = t_2$. 
  \\

  If we assume $t_1 = t_2$, for any paths $p_1 \in [w_1, t_1]$ and $p_2 \in [w_2, t_2]$,  we can see that $\ell = rev(p_1) * p_2$ is a loop in $\Gamma$ with weight $\epsilon = \frac{w(p_2)}{w(p_1)}$. Now, $p_1 * \ell$ is a path with end point $t_1$. The weight of $p_1 * \ell$ is $ w(p_1) * \epsilon  = w(p_1) *\frac{w(p_2)}{w(p_1)} = w(p_2)$. So we have that $p_1 \cdot \ell = p_2$ and so $\epsilon \cdot [w_1,t_1] = [w_2, t_2]$ and $[w_1, t_1]$, $[w_2, t_2]$ are in the same orbit as desired.
\end{proof}
This quotient on the vertices of $\Gamma_{tr}$ indentifies any two vertices corresponding to paths with the same end point. So the vertices of $\Gamma_{tr}^{T(\Gamma)}$ are in bijection with the vertices of $\Gamma$. Now, we must show that we have recovered the edge structure of $\Gamma$ as well. 

To do this, we first note that we will use the notation $orb([w(p), t(p)])$ to denote a vertex in $\Gamma_{tr}^{T(\Gamma)}$. Of couse, we can simply indentify this with a vertex $v$ in $\Gamma$. Our goal now is to show every edge with source $orb([w(p),t(p)])$ in $\Gamma_{tr}^{T(\Gamma)}$ has a corresponding edge with the same weight and source $v$ in $\Gamma$. 

We have an edge in $\Gamma_{tr}^{T(\Gamma)}$ from $orb([w_1, t_1])$ to $orb([w_2,t_2])$ whenever we have an edge $[w_1, t_1] \to [w_2, t_2]$ in $\Gamma_{tr}$, and the edge inherits it's weight in $\Gamma_{tr}$ (Note: we show that this definition of edges is well-defined in defintion \ref{gamma_H}). This means there is an edge of weight $\frac{w_2}{w_1}$ from $t_1 \to t_2$ in $\Gamma$, by definiton. Likewise, if there is an edge from $v_1 \to v_2$ of weight $w$ in $\Gamma$. We have for any path weight $p$, an edge $[p,v_1] \to [wp, v_2]$ in $\Gamma_{tr}$, and therefore and edge  $orb([p,v_1]) \to orb([wp,v_2])$ (here Lemma \ref{orbit end points} tell us that whichever path we pick, the orbits we get are indentical). So $\Gamma \cong \Gamma^{T(\Gamma)}_{tr}$ as graphs. Thus we can recover any graph $\Gamma$ from $\Gamma_{tr}$ and the action of  $T(\Gamma)$.

\section{Calculating $T_{0}$}
\label{distinguished subset subsection}
Tracial fair and balanced $\delta$-graphs $\Gamma$ correspond to tracial connected $W^{*}$-algebra objects in $TLJ(\delta)$. We would like to use this correspondence to simplify the application of Proposition \ref{t0(m) included in f(m)} to these standard invariants, specifically, to make a connection to the spectral invariant of the algebra object represented by the tracial fair and balanced $\delta$-graph $\Gamma$. We use our defintion of $T(\textbf{G})$ and $T_0(\textbf{M})$ to motivate the following definition.   

\begin{defn}
    For a tracial fair and balanced $\delta$-graph $\Gamma$, we define the set 
    $$T_0(\Gamma) = \{\lambda 
    \in \mathbb{R}^{*}_{+} \,\,\,|\,\,\, \lambda \in T(H) \text{ for } H_{tr} \simeq \Gamma\}$$
\end{defn}

\begin{defn}
For a tracial fair and balanced $\delta$-graph $\Gamma$ with vertex weighting $w_V$, $\alpha \in Aut(\Gamma)$ is an automorphism on $(\Gamma, w_V)$ if it preserves if the ratios $\frac{w_V(t)}{w_V(s)}$ for every edge $s \to t$ in $\Gamma$ (i.e. it preserves the edge weighting). Note that this means that  $w_V(\alpha(v)) = \lambda_\alpha * w_V(v)$ for every $v \in V(\Gamma)$, where $\lambda_\alpha = w_V(\alpha(*))$.  Then, define 
$$W^{\times}(\Gamma) = \{\lambda_\alpha = w_V(\alpha(*)) \,\,\,|\,\,\, \alpha \in Aut((\Gamma, w))\}$$
\end{defn}

\begin{prop}
\label{prop: T_0= Wx}
    For a fair and balanced $\delta$-graph $\Gamma$, $T_0(\Gamma) = W^{\times}(\Gamma)$
\end{prop}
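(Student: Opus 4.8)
The plan is to prove the two inclusions $W^{\times}(\Gamma)\subseteq T_0(\Gamma)$ and $T_0(\Gamma)\subseteq W^{\times}(\Gamma)$ separately, using the machinery developed in Sections \ref{actions of groups on graphs} and \ref{sub: recovering a graph} as the bridge between automorphisms of $(\Gamma,w_V)$ and non-tracial graphs whose tracial cover is $\Gamma$.

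For the inclusion $W^{\times}(\Gamma)\subseteq T_0(\Gamma)$: given $\alpha\in\Aut((\Gamma,w_V))$ with $\lambda_\alpha = w_V(\alpha(*))$, first I would check that $\alpha$ generates an action of the cyclic group $H=\langle\alpha\rangle\le\Aut((\Gamma,w_V))$, and that under the identification $\alpha\mapsto\lambda_\alpha$ this gives a homomorphism to $\mathbb{R}^{\times}_{>0}$ with $w_V(h\cdot v)=\lambda_h\, w_V(v)$, which is precisely the hypothesis needed to form $\Gamma^H$ as in Definition \ref{gamma_H}. By the Theorem of Section \ref{actions of groups on graphs}, $(\Gamma^H)_{tr}\cong\Gamma$, so $\Gamma^H$ is a witness for membership in $T_0(\Gamma)$; it then remains to show $\lambda_\alpha\in T(\Gamma^H)$, i.e.\ that there is a loop in $\Gamma^H$ of weight $\lambda_\alpha$. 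Here I would take a path $p$ in $\Gamma$ from $*$ to $\alpha(*)$ (which exists since $\Gamma$ is connected) of weight $w(p)=w_V(\alpha(*))/w_V(*)=\lambda_\alpha$ (normalizing $w_V(*)=1$); its image in $\Gamma^H$ is a loop at $orb(*)=orb(\alpha(*))$ of weight $\lambda_\alpha$, so $\lambda_\alpha\in T(\Gamma^H)\subseteq T_0(\Gamma)$. One should also handle the subgroup generated by several automorphisms, but since $T_0$ is permitted to be replaced by the group it generates (and here $W^{\times}(\Gamma)$ is visibly a subgroup of $\mathbb{R}^{\times}_{>0}$), treating the general finitely-generated subgroup $H\le\Aut((\Gamma,w_V))$ at once works identically.

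For the reverse inclusion $T_0(\Gamma)\subseteq W^{\times}(\Gamma)$: suppose $\lambda\in T(H)$ for some fair and balanced $\delta$-graph $H$ with $H_{tr}\cong\Gamma$. By the analysis in Section \ref{sub: recovering a graph}, $H$ is recovered as the quotient $\Gamma^{T(H)}$ where $T(H)$ acts on $\Gamma\cong H_{tr}$ by $[\omega,v]\mapsto[\omega\cdot w(\ell),v]$ for loops $\ell$; the key point (Lemma \ref{orbit end points}) is that two vertices $[w_1,t_1],[w_2,t_2]$ of $\Gamma$ lie in one orbit iff $t_1=t_2$. I would argue that each element $\epsilon\in T(H)$ acting on $\Gamma$ in this way is an automorphism of $(\Gamma,w_V)$: it permutes vertices bijectively (it's a group action), it sends $[\omega,v]$ to another vertex over the same $v\in V(H)$, and on edges it clearly preserves the source/target weight ratios since it only rescales the first coordinate uniformly — so the induced vertex-weight rescaling is exactly $w_V([\omega,v])\mapsto \epsilon\cdot w_V([\omega,v])$, giving $\epsilon = \lambda_{\alpha_\epsilon}$ for the corresponding automorphism $\alpha_\epsilon$. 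Since $\lambda\in T(H)$ means $\lambda=w(\ell)$ for a loop $\ell$ in $H$, and $T(H)$ is precisely the group acting, we get $\lambda\in W^{\times}(\Gamma)$. I'd want to double-check that the action is faithful enough that $\alpha_\epsilon$ is genuinely nontrivial when $\epsilon\ne 1$, but this follows from Lemma \ref{orbit end points} together with the fact that distinct weights give distinct vertices of $\Gamma_{tr}$.

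The main obstacle I anticipate is the direction $W^{\times}(\Gamma)\subseteq T_0(\Gamma)$, specifically verifying that an abstract weight-preserving graph automorphism $\alpha$ really does induce, via the $\Gamma^H$ construction, a \emph{connected} W*-algebra object (equivalently a connected fair and balanced $\delta$-graph) with $\Gamma$ as its tracial cover — the connectivity and the identification of the tracial cover are handled by the cited theorem, but one must be careful that the orbit space $\Gamma^H$ remains connected and locally finite and that the weight $\lambda_\alpha$ is actually \emph{realized} by a loop rather than merely being the ratio of weights along a non-closed path. The subtlety is entirely about producing a closed loop of the right weight after quotienting; once the endpoint-matching from Lemma \ref{orbit end points} is in hand this is short, but it is the crux of the argument and deserves to be spelled out carefully rather than asserted.
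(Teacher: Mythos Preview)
Your proposal is correct and follows essentially the same route as the paper's proof. For $W^{\times}(\Gamma)\subseteq T_0(\Gamma)$ you use the cyclic group generated by $\alpha$ and the $\Gamma^{H}$ construction, then observe that a path from $*$ to $\alpha(*)$ descends to a loop of weight $\lambda_\alpha$; the paper does exactly this (writing the acting group as $\mathbb{Z}$ via $1\mapsto\alpha$). For $T_0(\Gamma)\subseteq W^{\times}(\Gamma)$ you invoke the recovery of $H$ as $\Gamma^{T(H)}$ from Section~\ref{sub: recovering  a graph} and read off the automorphism from the $T(H)$-action; the paper is marginally more direct here, simply defining $\alpha_\ell([p])=[p*\ell]$ on $\Gamma\cong H_{tr}$ without appealing to the full recovery statement, but this is the same map you describe. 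The obstacle you flag (closedness of the image path and connectivity of $\Gamma^{H}$) is exactly the point the paper spells out, and your resolution via the orbit identification $orb(*)=orb(\alpha(*))$ matches theirs.
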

\begin{proof}
    If $\lambda \in T_0(\Gamma)$, by definition $\lambda = w(\ell)$ for some loop $\ell$ in a graph $H$ where $H^{tr} \simeq \Gamma$. Now, see that we can build an automorphism $\alpha_{\ell} \in Aut(\Gamma)$ by sending $[p]$ to $[p * \ell]$ (note that we can use this language since $\Gamma$ is $H^{tr}$). It is easy to see that this automorphism acts multiplicatively and that $w(\alpha_{\ell}(*)) = \lambda$, therefore $\lambda \in W^{\times}(\Gamma)$. 
    \\
    \\
    If $w(\alpha(*)) \in W^{\times}(\Gamma)$, define an action on $\Gamma$ by the homorphism $\mathbb{Z} \to Aut(\Gamma)$ generated by $1 \mapsto \alpha$ . As in section \ref{actions of groups on graphs}, we can construct a graph $\Gamma^{\mathbb{Z}}$ where $\Gamma^{\mathbb{Z}}_{tr} \simeq \Gamma$. We now must show that $w(\alpha(*))$ is the weight of a loop in $\Gamma^{\mathbb{Z}}$. 
    \\
    \\
    To do this, we first note that $*$ and $\alpha(*)$ share an orbit under the action of $\mathbb{Z}$, and therefore they will be mapped to the same vertex in $\Gamma^{\mathbb{Z}}$ by definition. Also, recall that we've constructed edges in $\Gamma^{\mathbb{Z}}$ so that we have for every path in $\Gamma$ with weight $\omega$, a well-defined path in $\Gamma^{\mathbb{Z}}$ with the same weight. Now, any path from $*$ to $\alpha(*)$ in $\Gamma$ has weight $w(\alpha(*))$ by defintion. In $\Gamma^{\mathbb{Z}}$, this path is a loop, and so $w(\alpha(*)) \in T_0(\Gamma)$ as desired. So these sets are equal. 
    \end{proof}
\subsection{Examples of  calculating $T_0(\Gamma)$}
Here we present some examples of how to use propositon \ref{prop: T_0= Wx} to calculate $T_0(\Gamma)$. In examples \ref{single chain actions} and \ref{ex: double chain actions}, we return to the graphs we have used as examples throughout the paper.  We calculate $T_0(\Gamma)$ for any graph $\Gamma$ that can be intrepreted as the Cayley graph of a group with a homomorphism to $\mathbb{R}^{\times}$ in example \ref{ex:cayley graphs} and show in remark \ref{remark: finitely generated R* subgroups} how this implies that every finitely generated subgroup of $\mathbb{R}^{\times}$ appears as $T_0(\Gamma)$ for some $\Gamma$. Finally, we show that $T_0(\Gamma)$ is trivial for both finite fair and balanced $\delta$-graphs (Proposition \ref{finite graph prop}) and for fair and balanced $\delta$-graphs with $|\delta| \approx 2$ (Example \ref{ex: approx 2}). 
\begin{ex}
\label{single chain actions}
Recall the graph $\Gamma_2$ we introduced in example \ref{single chain graph intro}. 
   $$\begin{tikzpicture}
        \node(none) (s) at (0,0){*} ;
        \node(none) (v)at (3,0){$\bullet$};
        \node(none) (w) at (-3,0){$\bullet$};
        \node(none) (x)at (6,0){$\bullet$};
        \node(none) (y) at (-6,0){$\bullet$};
        \node(none) (z) at (-7,0){$\cdots$};
        
        \node(none) (z) at (7,0){$\cdots$};
        \node(none) (1) at (-1.5,0.75){$q$};
        \node(none) (3) at (1.5,0.75){$q $};
         \node(none) (1) at (-4.5,0.75){$q$};
       `\node(none) (3) at (4.5,0.75){$q$};
        \node(none) (1) at (-1.5,-0.75){$q^{-1}$};
        \node(none) (3) at (1.5,-0.75){$q^{-1} $};
         \node(none) (1) at (-4.5,-0.75){$q^{-1}$};
        \node(none) (3) at (4.5,-0.75){$q^{-1} $};

         \draw [->] (w) to [out=30,in=150] (s) ;
        \draw [->] (s) to [out=210,in=330] (w) ;

        \draw [->] (s) to [out=30,in=150] (v) ;
        \draw [->] (v) to [out=210,in=330] (s) ;

        \draw [->] (v) to [out=30,in=150] (x) ;
        \draw [->] (x) to [out=210,in=330] (v) ;

         \draw [->] (y) to [out=30,in=150] (w) ;
        \draw [->] (w) to [out=210,in=330] (y) ;
    \end{tikzpicture}$$
In  example \ref{single chain graph intro}, we showed that $\Gamma_2$ is a tracial fair and balanced $\delta$-graph. The vertex weighting on the graph gives a vertex $k$ steps to the right of $*$ the weight $q^k$ and the vertex $k$ steps to the left of $*$ the weight $q^{-k}$.   Take $H_1 = \langle q \rangle$. We can define an action on $\Gamma_2$ by sending $*$ to the vertex with weight $q$, that is the vertex  to the right of $*$. This choice determines the image of the rest of the vertices in $\Gamma_2$ by definition. This action has one orbit, since $q^j \in H$ for any $j \in \mathbb{Z}$.    So the graph $\Gamma_2^{H_1}$ is trivial (that is, it has only one vertex). 

Now, we can consider the group $H_2 = \langle q^{3} \rangle$. We can define an action of $H_2$ on $\Gamma_2$ by sending $*$ to the unique vertex with weight $q^{3}$. Visually, this action shifts every vertex in $\Gamma_2$ to the right three spots. This action has three distinct orbits. Since as before, $w(h \cdot *) = hw(*) = h$, we cannot have $q, q^{2} \in orb(*)$, since $q, q^{2} \notin H_2$. Also, see that $q \notin orb(q^{2})$, since if $q^{2} = h \cdot q$, then $h = \frac{q^{2}}{q} = q$. Furthermore,  $orb(*), orb(q)$ and $orb(q^{2})$ are the only orbits, since every $q^j = q^{3m}q^\ell$ for any arbitrary $m$ and $\ell \in \{0,1,2\}$. Looking at edges between orbits, we can construct $\Gamma_2^{H_2}$ to be the following graph: 

$$\begin{tikzpicture}
    \node(none) (s) at (0,3){*} ;
    \node(none) (v)at (2,0){$\bullet$};
    \node(none) (w)at (-2,0){$\bullet$};
     \draw [->] (w) to [out=90,in=180] (s) ;
    \draw [->] (s) to  (w) ;

        \draw [->] (s) to [out=0,in=90] (v) ;
        \draw [->] (v) to  (s) ;
         \draw [->] (v) to [out=210,in=330] (w) ;
         \draw[->](w) to  (v);
    \node(none) at (0,0.5){$q^{-1}$} ;
    \node(none) at (-0.5,1.5){$q^{-1}$} ;
      \node(none) at (0.5,1.5){$q^{-1}$} ;
      \node(none) at (0,-1){$q$} ;
    \node(none) at (-1.8,2.2){$q$} ;
      \node(none) at (1.8,2.2){$q$} ;
\end{tikzpicture}$$
For $n \geq 3$, we can tell a similar story for the action of $H_n = \langle q^{n} \rangle$, and $G_2^{H_n}$ will be a cycle with $n$ vertices, with labels of $q$ in one direction and $q^{-1}$ in the other. 

We can use proposition \ref{prop: T_0= Wx} to calculate $T_0(\Gamma_2)$. For each $j \in \mathbb{Z}$, an autmorphism of $\Gamma_2$ sending * to the vertex with weight $q^{j}$. Thus $W^{\times}(\Gamma_2) = \langle q \rangle$, the full set of vertex weights. So $T_0(\Gamma_2) = \langle q \rangle$.
\end{ex}

\begin{ex}
    \label{ex: double chain actions}
    Now, we can also consider the following graph $\Gamma_{a,b}$, which we recall is the tracial cover of the graph $G_1$ presented in example \ref{double chain graph intro}:

    $$
\begin{tikzpicture}
    \node(none) (s) at (0,0){*} ;
     \node(none) (a) at (1.5, 0){$\bullet$};
     \node(none) (a-1) at (-1.5, 0){$\bullet$};
     \node(none) (b) at (0, 1.5){$\bullet$};
     \node(none) (b-1) at (0, -1.5){$\bullet$};
     \node(none) (ab) at (1.5,1.5){$\bullet$};
     \node(none) (a-1b) at (-1.5,1.5){$\bullet$};
     \node(none) (ab-1) at (1.5,-1.5){$\bullet$};
     \node(none) (a-1b-1) at (-1.5,-1.5){$\bullet$};

      \draw[->] (s) to (a);
     \draw[->] (s) to (a-1);
     \draw[->] (s) to (b);
     \draw[->] (s) to (b-1);
      \draw[->] (a) to (ab);
     \draw[->] (a) to (ab-1);
     \draw[->] (a-1) to (a-1b-1);
    \draw[->](a-1) to (a-1b);
     \draw[->] (b) to (ab);
     \draw[->] (b-1) to (ab-1);
     \draw[->] (b) to (a-1b);
     \draw[->] (b-1) to (a-1b-1);

    \node(none) (aa...) at (2,0) {$\cdots$};
    \node(none) (ab...) at (2,1.5) {$\cdots$};
    \node(none) (ab-1...) at (2,-1.5) {$\cdots$};
    \node(none) (a-1a-1...) at (-2,0) {$\cdots$};
    \node(none) (a-1b...) at (-2,1.5) {$\cdots$};
    \node(none) (a-1b-1...) at (-2,-1.5){$\cdots$};
    \node(none) (bb...) at (0,2) {$\vdots$};
    \node(none) (ba...) at (1.5,2) {$\vdots$};
    \node(none) (ba-1...) at (-1.5,2) {$\vdots$};

    \node(none) (b-1b-1...) at (0,-2) {$\vdots$};
    \node(none) (b-1a...) at (1.5,-2) {$\vdots$};
    \node(none) (b-1a-1...) at (-1.5,-2) {$\vdots$};  
\end{tikzpicture}$$
If we look at  $a^{k}b^{m}$ for integers $k,m$, we can define an action of $H = \langle a^{k}b^{m} \rangle$ on $\Gamma_{a,b}$ by $a^{k}b^{m} \cdot * = v$, where $v$ is the unique vertex in $\Gamma_{a,b}$ with $w(v) = a^{k}b^{m}$. This is a valid action by the symmetry of the graph. We can do this without any restrictions on $k$ and $m$ and these are the only automorphisms that act multiplicatively on $\Gamma_{a,b}$, so $W^{\times}(\Gamma_{a,b}) = \langle a, b \rangle$. Thus $T_0(\Gamma_{a,b}) = \langle a,b \rangle$. 
    
\end{ex}
Notice that in both of these cases, $\Gamma_{a,b}$ and $\Gamma_2$ can be seen as Cayley graphs of infinite groups ($\mathbb{Z}$ and $\mathbb{Z}^{2}$ respectively). We now look at this example in full generality. Note that in the following proposition, we restrict possible weightings to those that assign the same weight to every edge associated with left multiplication by a specific generator. 

\begin{ex}
\label{ex:cayley graphs}
      Let $\Gamma$ be the Cayley graph of a group $G = \langle S \rangle$ with edges representing left multiplication by elements of the given generating set $S$. Any homomorphism $\phi: G \rightarrow \mathbb{R}^{\times}$, induces a fair and balanced $\delta$-graph structure on $\Gamma$. Explicitly, the vertex weighting $w_V$ is defined by $w_V(v) = \phi(g)$ when $v$ is the vertex representing the group element $g$. The edge weighting $u$ is then defined by saying $u(e) = \frac{w_V(t(e))}{w_V(s(e))}$. It is easy to see that if $s(e)$ represent the element $g$ and $t(e)$ represents $sg$ for some $s \in S$, then $u(e) = \phi(s)$. We will show that for  this fair and balanced $\delta$-graph strucutre, $T_0(\Gamma) = \phi(G)$.
      
      Let $V = \{w(v) | v \in V(\Gamma)\}$. By defintion, $W^{\times}(\Gamma) \subseteq V$. We will show the opposite inclusion by constructing an explicit graph autmorphism for each element of $V$.   Choose some weight $w^* \in V$ and let $v^* \in V(\Gamma)$ be some vertex in $\Gamma$ whose weight is $w^*$. Because $\Gamma$ is a Cayley graph for $G$, $v^*$ corresponds to a group element $g^*$. 

    We can define an isomorphism $\rho_{v^*}: V(\Gamma) \to V(\Gamma)$ where $\rho_{v^*}(v) = v'$, where $g_{v'} = g_v * g^*$ (here $g_v$ is the group element corresponding to a vertex $v$). These maps are graph automorphisms, since (if $v' = sv$ for $s \in S$, then $v'g^* = svg^*$ for $g* \in G$, so $\rho_{v^*}$ preserves the edges). We must also show that $\rho_{v^{*}}$ preserves the weighting $w$ on edges and thus the fair and balanced $\delta$-graph structure. 
    
    We will show this by showing it when $g^*$ is a generator $s \in S$. Choose some vertex $v \in V(\Gamma)$ repesenting a group element $g$. Now, $\rho_s$ sends $v$ to $v_s$,the vertex representing $gs$. To show this automorphism respects the weighting, suppose we have an edge $v \to \bar{v}$ with weight $\phi(\bar{s})$, where $\bar{v}$ corresponds to $\bar{s}g$ for some other generator $\bar{s} \in S$. We see that $\rho_s(\bar{v})$ is vertex representing $\bar{s}gs$. So there is an edge $\rho_s(v) \to \rho_s(\bar{v})$ corresponding to multiplication by $\bar{s}$, which must then have weight $\phi(\bar{s})$ by definition. This is enough to show $\rho_s$ respects the weighting, and since any $\rho_{g^{*}}$ is just a composition of $\rho_s$, these maps respect the weighting as well.   
    
    Now, $\rho_g$ sends $*$ to $v^*$, and so $w^* = w(p_{g^*}(*)) \in W^{\times}(\Gamma)$. Therefore, $V \subseteq W^{\times}(\Gamma)$ and so $W^{\times}((\Gamma, w)) = V$. So by proposition \ref{prop: T_0= Wx}, $T_0(\Gamma) = V$. Since $V$ is defined by a homomorphism $\phi$, we can further say that $T_0(\Gamma) = \phi(G)$. 
\end{ex}

\begin{remark} 
    \label{remark: finitely generated R* subgroups}
    The above proposition tells us that any finitely generated subgroup $G$ of $\mathbb{R}^{*}$ will appear as  $T_0(\Gamma)$ for some $\Gamma$ and at least one choice of $\delta$. If $G = \langle g_1, ..., g_k \rangle$  we can choose $\delta = g_1 + g_1^{-1} + ...+ g_k + g_k^{-1}$ and say that $\Gamma$ is the Cayley graph of $G$ as in example \ref{ex:cayley graphs}. We can  choosing a weighting where edges corresponding to multiplication by $g_i$ have weight $g_i$. Using example \ref{ex:cayley graphs}, we see $T_0(\Gamma) = G$, as desired. 
\end{remark}
\begin{ex}
    Suppose $(\Gamma, w)$ is a tracial fair and balanced $\delta$-graph and $G$ is a group that acts simply transitively on $(\Gamma, w)$. For any vertex $v \in V(\Gamma)$, we have $g \in G$ where action by $g$ is an automorphism sending $*$ to $v$. This implies that $w(v) \in W^{\times}((\Gamma, w))$ and so $ T_0((\Gamma, w))$ is the entire set of vertex weights.
\end{ex}
\begin{prop}\label{finite graph prop}
    If $\Gamma$ is a finite tracial fair and balanced $\delta$-graph, then $T_0(\Gamma) = \{1\}$.
\end{prop}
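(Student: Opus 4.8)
The plan is to reduce the statement to Proposition~\ref{prop: T_0= Wx} and then exploit finiteness in an entirely elementary way. By that proposition, $T_0(\Gamma) = W^{\times}(\Gamma) = \{\lambda_\alpha = w_V(\alpha(*)) \mid \alpha \in \Aut((\Gamma,w))\}$, so it suffices to show that every $\alpha \in \Aut((\Gamma,w))$ satisfies $\lambda_\alpha = 1$. First I would recall the fact built into the definition of $W^{\times}(\Gamma)$: an automorphism preserving the edge weighting scales the vertex weighting, $w_V(\alpha(v)) = \lambda_\alpha \, w_V(v)$ for all $v \in V(\Gamma)$. (This is immediate from connectedness of $\Gamma$, since the vertex weight of $\alpha(v)$ can be computed along the $\alpha$-image of any path from $*$ to $v$, which has the same total weight.)

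Next I would observe that since $\Gamma$ is finite the set of vertex weights $S := \{w_V(v) : v \in V(\Gamma)\} \subseteq \mathbb{R}^{\times}_{>0}$ is finite and nonempty, and that because $\alpha$ permutes $V(\Gamma)$ we have $\lambda_\alpha \cdot S = \{w_V(\alpha(v)) : v \in V(\Gamma)\} = S$. The concluding step is the elementary fact that a finite nonempty subset of $\mathbb{R}^{\times}_{>0}$ invariant under multiplication by a fixed positive real $\lambda_\alpha$ forces $\lambda_\alpha = 1$: comparing $\max S$ with $\lambda_\alpha \max S \in S$ gives $\lambda_\alpha \le 1$, and comparing $\min S$ with $\lambda_\alpha \min S \in S$ gives $\lambda_\alpha \ge 1$; alternatively, $\prod_{s \in S} s = \prod_{s \in S}(\lambda_\alpha s) = \lambda_\alpha^{|S|}\prod_{s \in S} s$ yields $\lambda_\alpha^{|S|} = 1$. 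Either way $\lambda_\alpha = 1$, hence $W^{\times}(\Gamma) = \{1\}$ and therefore $T_0(\Gamma) = \{1\}$.

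I do not expect a genuine obstacle here: the argument is short once Proposition~\ref{prop: T_0= Wx} is in hand. The only point that merits a sentence of care is the reduction itself---that automorphisms of $(\Gamma,w)$ act by scaling the vertex weighting---which is already asserted in the definition of $W^{\times}(\Gamma)$ and uses nothing beyond connectedness of the graph and preservation of edge weights. (Conceptually, this also explains the remark in the introduction that finite fair and balanced $\delta$-graphs carry no realization obstruction of this type.)
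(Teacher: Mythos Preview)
Your proof is correct, but it takes a different route from the paper's. The paper argues directly from the definition of $T_0$ via the tracial cover: if some $\lambda\neq 1$ lies in $T_0(\Gamma)$, there is a graph $\Lambda$ with $\Lambda_{tr}\cong\Gamma$ and a loop in $\Lambda$ of weight $\lambda$; since the set of loop weights in $\Lambda$ is a subgroup of $\mathbb{R}^{\times}_{>0}$, it is infinite, and the explicit description of $\Lambda_{tr}$ (vertices indexed by pairs $[\text{weight},\text{endpoint}]$) then forces $\Gamma\cong\Lambda_{tr}$ to have infinitely many vertices. By contrast, you invoke Proposition~\ref{prop: T_0= Wx} to translate the problem into one about automorphisms of $(\Gamma,w)$, and then use the purely order-theoretic fact that a finite subset of $\mathbb{R}^{\times}_{>0}$ cannot be invariant under a nontrivial scaling. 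Your approach is slightly more elementary once Proposition~\ref{prop: T_0= Wx} is available and highlights why $W^{\times}$ is easy to compute for finite graphs; the paper's approach has the advantage of not depending on that proposition at all, relying instead only on the construction of $\Gamma_{tr}$ from Section~\ref{construction tracial cover}.
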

\begin{proof}
    Suppose that $\Gamma$ is a fair and balanced $\delta$-graph where $1 \neq \lambda \in T_0(\Gamma)$. Then there is a fair and balanced $\delta$-graph $\Lambda$ with $\Lambda_{tr} \simeq \Gamma$ so that $\lambda$ is the weight of a loop in $H$. Now, since the loops of $\Lambda$ form a subgroup of $\mathbb{R}^{*}$, there must be an infinte number of non-trivial loop weights. When we construct $\Lambda_{tr}$, we have a vertex for every loop weight $\omega$ (since the vertices of $\Lambda_{tr}$ are indentified by path weight and terminal vertex). So $\Lambda{tr}$ is infinite. 
\end{proof}
\begin{ex}
\label{ex: approx 2}
    The results of  \cite[Section 4]{MR3420332} allow us to examine the invariants we have constructed for small values of $\delta$. First, note that by the arithmetic mean-geometric mean inequality, that if $\delta  = q + q^{-1}$ and $q$ is a positive real number, that $\delta \geq 2$. In particular, \cite[Section 4]{MR3420332} classifies fair and balanced $\delta$-graphs with $|q| \approx 1$, where $|\delta| \approx 2$. Proposition \ref{finite graph prop} tells us, that types $A_n^{(1}, E_g^{(1)}, D'_m$ and $A'_m$ will give us trivial $W^\times(\Gamma)$ for any choice of parameters. Type $A'_\infty$ has no graph automorphisms such that $\alpha(*) \neq *$ and so also has trivial invariant for any parameter. The only non-trivial automorphisms of type $D^{*}_{\infty}$ swap $*$ and $\tilde{*}$, which have the same weighting and therefore the invariant group is trivial. 

    This leaves us with graph of type $A_{\infty, 
\infty}$ to consider. Recall that this is the following graph: 

 $$\begin{tikzpicture}
        \node(none) (s) at (0,0.25){\tiny $m$} ;
        \node(none) (s) at (0,0){\tiny$\bullet$} ;
        \node(none) (v)at (3.2,0.25){\tiny $m+1$};
        \node(none) (v)at (3,0){\tiny $\bullet$};
        \node(none) (w) at (-3.2,0.25){\tiny $m-1$};
        \node(none) (w) at (-3,0){\tiny $\bullet$};
        \node(none) (z) at (-4,0){$\cdots$};
        
        \node(none) (z) at (4,0){$\cdots$};
        \node(none) (1) at (-1.5,1){\tiny $|\frac{q^{x+m} +q^{-x-m}}{q^{x+m-1} + q^{-x-m+1}}|$};
        \node(none) (3) at (1.5,1){\tiny$|\frac{q^{x+m + 1} +q^{-x-m - 1}}{q^{x+m} + q^{-x-m}}|$};

        \node(none) (1) at (-1.5,-1){\tiny$|\frac{q^{x+m - 1} +q^{-x-m + 1}}{q^{x+m} + q^{-x-m}}|$};
        \node(none) (3) at (1.5,-01){\tiny$|\frac{q^{x+m} +q^{-x-m}}{q^{x+m + 1} + q^{-x-m - 1}}|$};
        
         \draw [->] (w) to [out=30,in=150] (s) ;
        \draw [->] (s) to [out=210,in=330] (w) ;

        \draw [->] (s) to [out=30,in=150] (v) ;
        \draw [->] (v) to [out=210,in=330] (s) ;

    \end{tikzpicture}$$

We will choose our star vertex as the vertex with $m = 0$. From here, it is relatively simple to see that if $m$ is positive, it's vertex weight will be $\frac{q^{x+m} + q^{-x-m}}{q^{x} + q^{-x}}$ and if $m$ is negative, it's vertex weight will be $\frac{q^{x-m} + q^{-x+m}}{q^{x} + q^{-x}}$. So $W^\times(A_{\infty, \infty}) = \langle \frac{q^{x+m} + q^{-x-m}}{q^{x} + q^{-x}}, \frac{q^{x-m} + q^{-x+m}}{q^{x} + q^{-x}} \rangle$. Since $|q| \approx 1$, $\frac{q^{x+m} + q^{-x-m}}{q^{x} + q^{-x}} \approx \frac{2}{2} = 1$ and $\frac{q^{x-m} + q^{-x+m}}{q^{x} + q^{-x}} \approx \frac{2}{2} = 1$. This means that $T_0(A_{\infty, \infty})$ is approximately $\{1\}$. So when $\delta \approx 2$, there are no non-trivial invariants. 
\end{ex}
---------------------------------------------------
\bibliographystyle{alpha}
{\footnotesize{
\bibliography{bibliography}
%\bibliography{../../../Documents/research/penneys/bibliography}
}}

\end{document}